
\documentclass{amsart}
\usepackage{xcolor}
\usepackage[colorlinks]{hyperref}
\hypersetup{colorlinks=true,linkcolor=red, anchorcolor=green,
	citecolor=cyan, urlcolor=red, filecolor=magenta, pdftoolbar=true}
%
%
%
 \newtheorem{thm}{Theorem}[section]
 
 \newtheorem{lem}[thm]{Lemma}
 \newtheorem{prop}[thm]{Proposition}
 \theoremstyle{definition}
 
 \theoremstyle{remark}
 \newtheorem{rem}[thm]{Remark}
 
 \numberwithin{equation}{section}

\begin{document}

%
%
%
%
%
%
%
%
%

\title[The quintic complex moment problem]
 {The quintic complex moment problem}

\author{H. El-Azhar}
\address{%
Mohammed V University in Rabat.\\
Rabat\\
Morocco} \email{elazharhamza@gmail.com}
\author{A. Harrat}
\address{%
Mohammed V University in Rabat.\\
Rabat\\
Morocco} \email{ayoub1993harrat@gmail.com }
\author{K. Idrissi}
\address{%
Mohammed V University in Rabat.\\
Rabat\\
Morocco} \email{kaissar.idrissi@gmail.com}
\author[E. H. Zerouali]{E. H. Zerouali}

\address{%
Mohammed V University in Rabat.\\
Rabat\\
Morocco}

\email{zerouali@fsr.ac.ma}

\subjclass[2010]{Primary 47A57, 15A83; Secondary 30E05, 44A60}

\keywords{Quintic complex moment problem, minimal representing
measure, complex-valued bi-sequence.}

\date{}

\begin{abstract}
Let $\gamma^{(m)} \equiv \{ \gamma_{ij} \}_{0 \leq i +j \leq m}$ be
a given complex-valued sequence. The truncated complex moment
problem (TCMP in short) involves determining necessary and
sufficient conditions for the existence of a positive Borel measure
$\mu$ on $\mathbb{C}$ (called a representing measure for
$\gamma^{(m)}$) such that $\gamma_{ij} = \int \overline{z}^i z^j
d\mu$ for $0 \leq i +j \leq m$. The TCMP has been completely solved
only when $m= 1, 2, 3, 4$.

 We provide in this paper a concrete solution to the quintic TCMP (that is, when $m = 5$). We also study  the cardinality of the minimal representing
measure. Based on the bivariate recurrences sequences's properties
with some Curto-Fialkow's results, our method intended to be useful
for all odd-degree moment problems.
\end{abstract}

\maketitle
\section{Introduction}
Given a doubly indexed finite sequence of complex numbers
 $$\gamma \equiv \gamma^{(m)} = \{\gamma_{ij}\}_{0 \leq i +j \leq m} = \{\gamma_{00}, \gamma_{01}, \gamma_{10}, \ldots, \gamma_{0m}, \ldots,  \gamma_{m0} \}$$
with $\gamma_{00}>0$ and $\overline{\gamma}_{ij}=\gamma_{ji} $ for
$0 \leq i +j \leq m$. The truncated complex moment problem (in
short, TCMP) associated with  $\gamma$  entails finding a positive
Borel measure $\mu$ supported in the complex plane $\mathbb{C}$ such
that
 \begin{equation}\label{i.0}
\gamma_{ij}=\int \overline{z}^iz^jd\mu \phantom{du texte} (0\leq i+
j\leq m);
\end{equation}
A sequence  $\{\gamma_{ij}\}_{0\leq i+ j\leq m}$   satisfying
(\ref{i.0})   will be  called a truncated moment sequence and the
solution $\mu$ is said to be a representing measure associated to
the sequence  $\{\gamma_{ij}\}_{0\leq i+ j\leq m}$.

In \cite{Stochel2001solving} J. Stochel  has shown that solving TCMP
solves the  widely studied  Full Moment Problem (see, for example,
\cite{Aheizer1965classical, Bayer2006proof,
Blekherman2012nonnegative, Demanze2000moments, Laurent2009sums,
Putinar2006multivariate, Shohat1943problem,
Vasilescu2012dimensional}). More precisely, a full moment sequence
$\{ \gamma_{ij} \}_{i, j \in \mathbb{Z}_+}$ admits a representing
measure if and only if each of its  truncation $\gamma^{(m)}$ admits
a representing measure.

The truncated complex moment problem serves as a prototype for
several other moment problems  to which it is closely related. Its
application can be found in  subnormal operator theory
\cite{sarason1987moment, lambert1984characterization,
tchakaloff1957formules}, polynomial hyponormality
\cite{curto1993nearly} and joint hyponormality
\cite{curto1993recursively, curto1994recursively}. It is also
related to the optimization theory \cite{Lasserre2001global,
Lasserre2002polynomials, Lasserre2002bounds, lasserre2010moments,
Laurent2009sums} and arise in pure and applied mathematics and in
the sciences in general.

For the  even case  $m = 2n$, Curto and Fialkow developed in a
series of papers  an approach for TCMP based on positivity and flat
extensions of the moment matrix, see Section 2. This allowed them to
find solutions for various particular cases of  truncated moment
problems (see, for instance, \cite{CURTO1996SOLUTION,
CURTO1998FLATCONJUGATE, CURTO1998FLAT, curto2005truncated,
curto2008extremal,idrissi2016multivariable,idrissi2016complex}).
However, only the cases $m=2$ and $m = 4$ are completely solved (cf.
\cite{CURTO1996SOLUTION, curto2002solution, fialkow2010positivity,
curto2016concrete}).

In the odd case $m =2n +1$, a general solution to some partial cases
of the TCMP can be found in \cite{kimsey2011matrix} and
\cite{kimsey2014cubic} as well as a solution to the truncated matrix
moment problem; a solution to the cubic complex moment problem (when
m = 3) was given in \cite{kimsey2014cubic}, see also \cite{curto2017new}. The solution is based on
commutativity conditions of matrices determined by $\{ \gamma_{ij}
\}_{0 \leq i+j \leq 2n +1}$.

Therefore, only the cases $m = 1, 2, 3$ and $4$ (the quadratic, the
cubic and the quartic moment problem) have been completely achieved.
All the other cases (quintic, sixtic, ...) are open and interest
several authors; as indicated in many recent papers (see, for
instance, \cite{curto2015non, curto2017division, curto2017new,
Yoo2010extremal, Yoo2017sextic}).

In this paper, we provide a concrete solution to the, almost all, quintic moment
problem (i.e. $m =5$) when one desires a minimal representing
measure. To this aim, we investigate the structure of recursive
complex-valued bi-indexed sequences and we combine the obtained
observations with some results due to R.Curto and L. Fialkow, to
provide a new technique for solving the odd-degree TCMP. We notice
that our techniques furnish a short solution to the cubic moment
problem (we omit the proof because the cubic moment problem is
already solved, see \cite{curto2017new, kimsey2014cubic}) and expected to be
useful for higher odd-degree truncated moment problems.

Let $\gamma^{(5)} = \{\gamma_{ij}\}_{0 \leq i +j \leq 5}$ be a given
complex valued bi-sequence. We associate with $\gamma^{(5)}$ the next
two matrices that will play a crucial role in our approach.
\small{\begin{equation}\label{i.1} M(2):=\left(
       \begin{array}{cccccc}
         \gamma_{00} & \gamma_{01} & \gamma_{10}  & \gamma_{02} & \gamma_{11} & \gamma_{20}\\
         \gamma_{10} & \gamma_{11} & \gamma_{20}  & \gamma_{12} & \gamma_{21} & \gamma_{30}\\
         \gamma_{01} & \gamma_{02} & \gamma_{11}  & \gamma_{03} & \gamma_{12} & \gamma_{21}\\
         \gamma_{20} & \gamma_{21} & \gamma_{30}  & \gamma_{22} & \gamma_{31} & \gamma_{40}\\
         \gamma_{11} & \gamma_{12} & \gamma_{21}  & \gamma_{13} & \gamma_{22} & \gamma_{31}\\
         \gamma_{02} & \gamma_{03} & \gamma_{12}  & \gamma_{04} & \gamma_{13} & \gamma_{22}\\
       \end{array}
     \right)
\hfill B := \left(
       \begin{array}{cccc}
         \gamma_{03} & \gamma_{12} & \gamma_{21} & \gamma_{30}\\
         \gamma_{13} & \gamma_{22} & \gamma_{31} & \gamma_{40}\\
         \gamma_{04} & \gamma_{13} & \gamma_{22} & \gamma_{31}\\
         \gamma_{23} & \gamma_{32} & \gamma_{41} & \gamma_{50}\\
         \gamma_{14} & \gamma_{23} & \gamma_{32} & \gamma_{41}\\
         \gamma_{05} & \gamma_{14} & \gamma_{23} & \gamma_{32}\\
       \end{array}
     \right).
\end{equation}
}

 Let us recall that thanks to Douglas factorization theorem, we have
$Rang~~B \subseteq Rang~~M(2)$ if, and only if, there exists a
matrix $W$ such that $B = M(2) W$. We will show, in Section 2, that
the Hermitian matrix  $W^* M(2) W$ is symmetric with respect to the
second diagonal, then one can set
\begin{equation}\label{i.3}
 W^* M(2) W = \left( \begin{matrix}
 a            & b            & c & d\\
 \overline{b} & e            & f & c\\
 \overline{c} & \overline{f} & e & b\\
 \overline{d} & \overline{c} & \overline{b}     & a
 \end{matrix} \right)
 \end{equation}

As we will see  in the sequel, the entries $a, b, e$ and $f$ in the
matrix $W^* M(2) W$ encodes the  complete information on  the
cardinal of the support of the minimal representing measure.
\begin{thm}\label{main2}
Let $\gamma^{(5)} \equiv \{\gamma_{ij}\}_{i+j\leq 5}$ be a given finite  sequence, such that\\
\centerline{ $M(2) \geq 0$,  $Rang~~B \subseteq Rang~~M(2)$ and $a \neq e$ or $b = f$.}
Then the quintic moment problem, associated with  $\gamma^{(5)}$,
admits a solution $\mu$. Moreover, The smallest cardinality of
$supp~~\mu$ is
  \begin{itemize}
    \item  $card~~supp~~\mu = r \iff a = e$ and $b = f$,
    \item  $card~~supp~~\mu = r +1 \iff  \quad a\neq e \textrm{ and } \frac{a-e}{2} < \mid b -f
    \mid$,
    \item  $card~~supp~~\mu = r +2 \iff \quad a>e \textrm{ and } \frac{a-e}{2} \geq \mid b -f
    \mid$;
  \end{itemize}
      where $r :=~~card~~M(2)$ and the numbers $a, b,e $ and $f$ are given by \eqref{i.3}.
    \end{thm}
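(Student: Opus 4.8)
The strategy rests on the classical Curto--Fialkow machinery for the truncated moment problem, which translates the existence of a representing measure into the existence of a positive, recursively generated, flat extension of a moment matrix. Here the datum $\gamma^{(5)}$ determines the $6\times 6$ matrix $M(2)$ together with the "half-block" $B$ (which carries the degree-$5$ data in the columns $Z^3, \overline{Z}Z^2, \overline{Z}^2 Z, \overline{Z}^3$). Since $M(2)\ge 0$, the quintic problem is solvable exactly when we can complete the picture to a moment matrix $M(3)$ of the form $\left(\begin{smallmatrix} M(2) & B\\ B^* & C\end{smallmatrix}\right)$ with $M(3)\ge 0$, $M(3)$ recursively generated, and $\mathrm{rank}\,M(3)=\mathrm{rank}\,M(2)$ (a flat extension), which then by Curto--Fialkow yields a $\mathrm{rank}\,M(2)$-atomic measure; when a flat extension at level $3$ is impossible one instead looks for a positive recursively generated extension of minimal rank and invokes the flat-extension-of-$M(3)$ result at the next stage to produce a measure with one or two extra atoms.

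First I would record, from the hypothesis $\mathrm{Ran}\,B\subseteq\mathrm{Ran}\,M(2)$ and Smul'jan's lemma, that the positive completions $C$ of $M(3)$ are exactly those with $C = B^* W + \Delta$ where $B = M(2)W$ and $\Delta\ge 0$ is arbitrary; the choice $\Delta = 0$ gives the unique flat (rank-preserving) Hermitian extension candidate $C_0 = W^* M(2) W$. The key structural point — to be established in Section 2 as promised in the excerpt — is that $W^*M(2)W$ is persymmetric, i.e.\ invariant under reflection in the anti-diagonal, so it has the shape displayed in \eqref{i.3} with parameters $a,b,c,d,e,f$. One then must decide when $M(3)$ with block $C_0$ is recursively generated; the obstruction is precisely that the candidate $C_0$ need not respect the relation forced by $Z\overline{Z} = \overline{Z}Z$ inside $M(3)$, and a direct computation shows the "$B$-block" is automatically consistent but the diagonal block forces the compatibility conditions $a=e$ and $b=f$. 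When these hold, $M(3)$ is a flat, positive, recursively generated extension, Curto--Fialkow gives a representing measure with $r = \mathrm{rank}\,M(2)$ atoms, and minimality follows because any representing measure $\nu$ has $\mathrm{card}\,\mathrm{supp}\,\nu \ge \mathrm{rank}\,M(2)$.

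When $a\ne e$ or $b\ne f$, a flat extension at level $3$ is obstructed, so I would instead build $M(3)$ positive and recursively generated of the smallest possible rank $r+k$ and then apply the flat-extension theorem to pass to $M(4)$, obtaining a measure with $r+k$ atoms; matching this against the lower bound $\mathrm{rank}\,M(2)$ and the finer rank count coming from the recursively-generated structure gives the trichotomy. Concretely, the added rank is governed by the $2\times 2$ Hermitian "defect" built from the difference between the honest block $C$ and $C_0$; writing out the recursively-generated constraints, the relevant defect reduces to a $2\times 2$ matrix whose spectrum is controlled by $a-e$ and $|b-f|$. Here is where the hypothesis "$a\ne e$ or $b=f$" enters: it excludes the one configuration ($a=e$, $b\ne f$) where the minimal defect would be a genuinely indefinite obstruction not resolvable by adding one or two atoms, which is exactly the delicate case the authors flag as remaining open. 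For the remaining configurations: if $a\ne e$ and $\tfrac{a-e}{2} < |b-f|$, a rank-one enlargement (one extra atom) suffices; if $a>e$ and $\tfrac{a-e}{2}\ge |b-f|$, the defect is positive semidefinite of rank $\le 2$ and two extra atoms are needed and suffice; and the displayed inequalities are forced to be sharp by comparing with the variety/column-relation count of $M(2)$.

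The main obstacle I anticipate is the bookkeeping that identifies the minimal positive recursively generated extension $M(3)$ in the non-flat case and proves its rank is exactly $r+1$ or $r+2$ as claimed — that is, simultaneously constructing an explicit such extension (to get the upper bound on the number of atoms) and showing no smaller one exists (the lower bound). The lower bound is the harder half: it requires showing that every positive, recursively generated extension of $M(2)$ compatible with $B$ must "see" the defect encoded by $(a,b,e,f)$, which is where the promised analysis of recursive complex-valued bi-indexed sequences — tracking how a prescribed column relation in $M(2)$ propagates to forced relations among the degree-$5$ moments — does the real work. Everything else (persymmetry of $W^*M(2)W$, the Smul'jan/Douglas reduction, and the final invocation of Curto--Fialkow's flat extension theorem) is comparatively routine once that structural analysis is in place.
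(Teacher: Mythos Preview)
Your overall architecture is correct --- extend $\gamma^{(5)}$ to a positive $M(3)$ of minimal rank, then pass to a flat $M(4)$ --- but there are two genuine gaps and a misplacement of the key lemma.

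First, the structural constraint on $C$ is not ``recursive generation'' or a phantom relation ``$Z\overline{Z}=\overline{Z}Z$''; it is simply that $C(3)=M[3,3]$ must be \emph{Toeplitz} for $M(3)$ to be a moment matrix at all. Persymmetry of $W^*M(2)W$ supplies part of that Toeplitz form automatically; the remaining obstructions are exactly $a=e$ and $b=f$. The defect $C(3)-W^*M(2)W$ is a full $4\times 4$ matrix, not $2\times 2$, and the paper determines when it can have rank $1$ by writing out the four conditions \eqref{pr.1} and translating them into the intersection of two circles $\mathcal{C}(b,\sqrt{(\gamma_{33}-a)(\gamma_{33}-e)})$ and $\mathcal{C}(f,\gamma_{33}-e)$; the dichotomy $\frac{a-e}{2}\lessgtr |b-f|$ comes from the limiting behaviour of $(x-e)-\sqrt{(x-a)(x-e)}$ as $x\to\infty$. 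This is the \emph{lower bound} on the number of atoms, and it uses no recursive-sequence machinery.

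Second --- and this is the real gap --- there is no off-the-shelf ``flat-extension theorem'' taking an arbitrary positive $M(3)$ to a flat $M(4)$. This is precisely what must be \emph{constructed}, and it is where the recursive bi-sequence analysis actually lives. The paper's choice of $C(3)$ in Step~1 is engineered so that $M(3)$ acquires the column relation $\overline{Z}Z^2=\alpha Z^3+R_{\overline{z}z^2}$ with $\alpha=(\gamma_{42}-b)/(\gamma_{33}-a)\neq 0$. One then needs $|\alpha|\neq 1$ to solve the conjugate-linear equation $\gamma_{43}=\alpha\,\overline{\gamma_{43}}+\text{(known)}$; since $|\alpha|^2=(\gamma_{33}-e)/(\gamma_{33}-a)$, this is exactly where $a\neq e$ is used, and it is the true reason the case $a=e$, $b\neq f$ is excluded --- not an ``indefinite defect''. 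With $\gamma_{43}$ in hand, the paper defines a second generating polynomial $z^4-P_{z^4}$ and invokes Lemma~\ref{lem.II-1} (the recursive bi-sequence lemma) to show that both $\overline{z}z^2-P_{\overline{z}z^2}$ and $z^4-P_{z^4}$ remain generating polynomials for the extended $\gamma^{(8)}$, which forces the degree-$4$ columns of $M(4)$ into the span of the degree-$\leq 3$ columns. You have the lemma attached to the wrong half of the argument: it secures the \emph{upper} bound (existence of an $(r{+}k)$-atomic measure), not the lower.
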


Since (as we will show in  Section 2) $M(2) \geq 0$ and $Rang~~B \subseteq Rang~~M(2)$ are two necessary conditions for the quintic TCMP, associated with $\gamma^{(5)}$, then Theorem \ref{main2} provides a concrete solution to the quintic complex moment problem, exept for the case $a \neq e$ or $b = f$. The difficulty that we encountered in solving the remaining case ( $a \neq e$ or $b = f$) is technical, not a failure in the method, see Section 5.

This paper is organized as follows. In Section 2, we will give
useful tools and results usually used in the treatment of the
truncated complex moment problems. We will investigate in Section 3
the complex-valued recursive bi-sequences and we will exhibit
important results  for quintic TCMP in Section 4. Finally, in
Section 5, we solve the quintic complex moment problem together with
the minimal support problem.
\section{ Preliminaries }
First, we recall a fundamental necessary condition. To this end, let
us assume that $\gamma^{(2n)} \equiv \{\gamma_{ij}\}_{i+ j \leq 2n}$
is a given moment sequence and let $\mu$ be the associated
representing measure, then, for every $p \equiv \sum\limits_{h, k}
a_{hk} \overline{z}^h z^k  \in \mathbb{C}[\overline{z}, z]$,
$$0 \leq \int \mid p\mid^2 d\mu = \sum\limits_{h, k, h', k'} a_{hk} \overline{a_{h'k'}} \int \overline{z}^{h +k'} z^{k +h'} = \sum\limits_{h, k, h', k'} a_{hk} \overline{a_{h'k'}} \gamma_{h +k', k +h'},$$
or, equivalently, the moment matrix $M(n)\equiv
M(n)(\gamma^{(2n)})$, defined below, is semi-definite positive.
\begin{equation}\label{moment matrix}
M(n) :=\left( \begin{matrix}
 M[0,0]&M[0,1]&\ldots&M[0,n]\\
 M[1,0]&M[1,1]&\ldots&M[1,n]\\
 \vdots&\vdots&\ddots&\vdots\\
 M[n,0]&M[n,1]&\ldots&M[n,n]
 \end{matrix} \right),
\end{equation}
where
\begin{equation*}
M[i,j]=\left( \begin{matrix}
 \gamma_{i,j}&\gamma_{i+1,j-1}&\ldots&\gamma_{i+j,0}\\
 \gamma_{i-1,j+1}&\gamma_{i,j}&\ldots&\gamma_{i+j-1,1}\\
 \vdots&\vdots&\ddots&\vdots\\
 \gamma_{0,i+j}&\gamma_{1,i+j-1}&\ldots&\gamma_{j,i}
 \end{matrix} \right).
\end{equation*}

Considering the lexicographic order,
\begin{equation}\label{lexicographic order}
1, Z, \overline{Z}, Z^2, Z\overline{Z}, \overline{Z}^2, \dots, Z^n
,Z^{n-1}\overline{Z}, \ldots ,Z\overline{Z}^{n-1}, \overline{Z}^n,
\end{equation}
to denote rows and columns of the moment matrix $M(n)$.
 For example, The $M(3)$ matrix is
\small{
\begin{equation}\label{M(4)}
\bordermatrix{
         &1          & & Z    & \overline{Z} & & Z^2  & Z\overline{Z} & \overline{Z}^2 & & Z^3 & Z^2 \overline{Z} & Z \overline{Z}^2 & \overline{Z}^3
\cr
         1 &\gamma_{00} & \mid & \gamma_{01} & \gamma_{10}  & \mid & \gamma_{02} & \gamma_{11} & \gamma_{20} & \mid & \gamma_{03} & \gamma_{12} & \gamma_{21} & \gamma_{30}
\cr
         &--        &-& --   & -- & - & --  & -- & -- & - & -- & -- &  -- & --
\cr
         Z &\gamma_{10} & \mid & \gamma_{11} & \gamma_{20}  & \mid & \gamma_{12} & \gamma_{21} & \gamma_{30} & \mid & \gamma_{13} & \gamma_{22} & \gamma_{31} & \gamma_{40}
         \cr
         \overline{Z} &\gamma_{01} & \mid & \gamma_{02} & \gamma_{11}  & \mid & \gamma_{03} & \gamma_{12} & \gamma_{21} & \mid & \gamma_{04} & \gamma_{13} & \gamma_{22} & \gamma_{31}
\cr
         &--        &-& --   & -- & - & --  & -- & -- & - & -- & -- &  -- & --
\cr
         Z^2  &\gamma_{20} & \mid & \gamma_{21} & \gamma_{30}  & \mid & \gamma_{22} & \gamma_{31} & \gamma_{40} & \mid & \gamma_{23} & \gamma_{32} & \gamma_{41} & \gamma_{50}
         \cr
         Z\overline{Z} &\gamma_{11} & \mid & \gamma_{12} & \gamma_{21}  & \mid & \gamma_{13} & \gamma_{22} & \gamma_{31} & \mid & \gamma_{14} & \gamma_{23} & \gamma_{32} & \gamma_{41}\cr
         \overline{Z}^2 &\gamma_{02} & \mid & \gamma_{03} & \gamma_{12}  & \mid & \gamma_{04} & \gamma_{13} & \gamma_{22} & \mid & \gamma_{05} & \gamma_{14} & \gamma_{23} & \gamma_{32}
\cr
         &--        &-& --   & -- & - & --  & -- & -- & - & -- & -- &  -- & --
\cr
         Z^3 &\gamma_{30} & \mid & \gamma_{31} & \gamma_{40}  & \mid & \gamma_{32} & \gamma_{41} & \gamma_{50} & \mid & \gamma_{33} & \gamma_{42} & \gamma_{51} & \gamma_{60}
         \cr
         Z^2 \overline{Z} &\gamma_{21} & \mid & \gamma_{22} & \gamma_{31}  & \mid & \gamma_{23} & \gamma_{32} & \gamma_{41} & \mid & \gamma_{24} & \gamma_{33} & \gamma_{42} & \gamma_{51}\cr
         Z \overline{Z}^2 &\gamma_{12} & \mid & \gamma_{13} & \gamma_{22}  & \mid & \gamma_{14} & \gamma_{23} & \gamma_{32} & \mid & \gamma_{15} & \gamma_{24} & \gamma_{33} & \gamma_{42}\cr
         \overline{Z}^3 &\gamma_{03} & \mid & \gamma_{04} & \gamma_{13} & \mid  & \gamma_{05} & \gamma_{14} & \gamma_{23} & \mid & \gamma_{06} & \gamma_{15} & \gamma_{24} & \gamma_{33}
     }.
\end{equation}
} Observe in passing  that each block $M[i, j]$ has a  Toeplitz
form. That is each of its  diagonals contains constant entries. On
the other hand, it is easy to see that  the matrix $M(n)$ detects
the positivity of the Riesz functional given by
$$
\Lambda_{\gamma^{(2n)}}:p(\overline{z}, z) \equiv \sum_{0\leq
i+j\leq 2n}a_{ij}\overline{z}^i z^j \longrightarrow \sum_{0\leq
i+j\leq 2n} a_{ij}\gamma_{ij}
$$
on the cone generated by the collection $\{p\overline{p}:
p\in\mathbb{C}_n[\overline{z}, z]\}$, where
$\mathbb{C}_n[Z,\overline{Z}]$ is the vector space of polynomials in
two variables with complex coefficients and total  degree less than
or equal to $n$.

It is an  immediate  observation that the rows $\overline{Z}^kZ^l$,
columns $\overline{Z}^iZ^j$ entry of the matrix $M(n)$ is equal to
$\Lambda_{\gamma^{(2n)}}(\overline{z}^{i+l}z^{j+k})
=\gamma_{i+l,j+k}$. For reason of simplicity, we identify a
polynomial $p(\overline{z}, z)\equiv \sum a_{ij} \overline{z}^i z^j$
with its coefficient vector $p = (a_{ij})$ with respect to the basis
of monomials of $\mathbb{C}_n[\overline{z}, z]$ in
degree-lexicographic order. Clearly,  $M(n)$ acts on these
coefficient vectors as follows:
\begin{equation}\label{5}
\langle M(n) p, q\rangle = \Lambda_{\gamma^{(2n)}}(p\overline{q}).
\end{equation}
A theorem of Smul'jan \cite{Shmul1959operator} shows that a block
matrix
\begin{equation}\label{qc0}
 M = \left( \begin{matrix}
 A   & B\\
 B^* & C
 \end{matrix} \right)\ge 0,
 \end{equation}
 if and only if
$$
\begin{array}{ll} (i) & A \geq 0,\\
 (ii)& \mbox{ there exists a matrix } W \mbox{  such that } B = AW, \\
 (iii) &C \geq W^*AW.
\end{array}
$$
 Since $A =A^*$, we obtain  $W^*AW$ is independent of $W$ provided that $B=AW$. Moreover, $rank~~M = rank~~A \Leftrightarrow C = W^*AW$ for some $W$ such that $B = A W$. Conversely, if $A \geq 0$, any extension $M$ satisfying $rank~~M = rank~~A$ (if this condition is satisfied, we will say  that $M$ is a flat extension of $A$) is necessarily positive. Notice  also that from the expression
\begin{equation*}
\left( \begin{matrix} I   & 0 \\ -W^*  & I'  \end{matrix} \right) M
\left( \begin{matrix} I   & -W \\ 0  & I'  \end{matrix} \right) =
\left( \begin{matrix} A   & 0 \\ 0  & C - W^*AW  \end{matrix}
\right),
\end{equation*}
where $I$ and $I'$ denote the unit matrices, we deduce that
\begin{equation}\label{C-WAW}
rank~~M = rank~~A + rank~~(C - W^*AW).
\end{equation}
By Smul'jan's theorem, $M(n) \geq 0$ admits a (necessarily positive)
flat extension
\begin{equation}\label{ext.1}
M(n+1) = \left( \begin{matrix} M(n)   & B \\ B^*  & C  \end{matrix}
\right)
\end{equation}
in the form of a moment matrix $M(n+1)$ if and only if
\begin{enumerate}
\item[$(i)$] $B = M(n) W$ for some $W$, \item[ $(ii)$] $C = W^*M(n)W$ is a Toeplitz matrix.
\end{enumerate}
We have the next result due to Curto and Fialkow,
\begin{thm}\label{c.m.t}\cite[Theorem 5.13]{CURTO1996SOLUTION}
The finite sequence  $\gamma^{(2n)}$ has a $rank~~M(n)$-atomic
representing measure if and only if $M(n) \geq 0$ and $M(n)$ admits
a flat extension $M(n+1)$. That is,  $M(n)$ can be extended to a
positive moment matrix $M(n+1)$ satisfying $rank~~M(n+1) =
rank~~M(n)$.
\end{thm}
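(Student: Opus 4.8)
The statement is an equivalence, and I would prove the two implications separately; the reverse (sufficiency) direction carries essentially all the weight.

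For \emph{necessity}, suppose $\gamma^{(2n)}$ has a $\operatorname{rank}M(n)$-atomic representing measure $\mu$ and put $r := \operatorname{rank}M(n) = \operatorname{card}\operatorname{supp}\mu$. Since $\mu$ is finitely atomic it has moments of every order, so I would assemble from them a matrix $M(n+1)$; it is automatically Hermitian with the Toeplitz block structure of \eqref{moment matrix}, and it is positive semidefinite because its Riesz functional satisfies $\langle M(n+1)p,p\rangle = \int|p|^2\,d\mu \ge 0$. For the rank I would use the factorization $M(n+1) = V^{*}DV$, where $V$ evaluates the monomials of degree $\le n+1$ at the $r$ atoms and $D$ is the diagonal matrix of the (positive) densities; this yields $\operatorname{rank}M(n+1) \le r$. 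As $M(n)$ is a compression of $M(n+1)$ we also have $\operatorname{rank}M(n+1) \ge \operatorname{rank}M(n) = r$, whence $\operatorname{rank}M(n+1) = \operatorname{rank}M(n)$ and $M(n+1)$ is the desired flat extension.

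For \emph{sufficiency}, assume $M(n) \ge 0$ admits a positive moment-matrix extension $M(n+1)$ with $\operatorname{rank}M(n+1) = \operatorname{rank}M(n) =: r$. The plan is first to \emph{propagate flatness}: I claim $M(n+1)$ admits a unique flat moment-matrix extension $M(n+2)$, and inductively a chain $M(n+k)$ with $\operatorname{rank}M(n+k) = r$ for all $k \ge 1$. Since the rank does not increase from $M(n)$ to $M(n+1)$, every column indexed by a monomial of degree $n+1$ is a fixed linear combination of lower-degree columns; the recursive nature of these relations — multiplying a column relation by $Z$ or $\overline{Z}$ produces a relation one degree higher that must respect the Hankel/Toeplitz pattern of a moment matrix — forces the next block $B$ and its corner $C = W^{*}M(n+1)W$, and Smul'jan's theorem (the flat extension of a positive matrix is positive, cf. \eqref{C-WAW}) makes $M(n+2) \ge 0$. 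Iterating produces an infinite positive semidefinite moment matrix $M(\infty)$ of finite rank $r$. I expect this propagation to be the main obstacle: one must verify that the recursively generated relations are mutually consistent, so that the forced extension is genuinely a moment matrix and not merely a positive block matrix.

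Granting $M(\infty)$, I would extract $\mu$ by the standard functional-analytic construction. Extend the Riesz functional $\Lambda$ to all of $\mathbb{C}[\overline{z},z]$ via $M(\infty)$ and set $\langle p,q\rangle := \Lambda(p\overline{q})$, a positive semidefinite form. Let $\mathcal{N} := \{p : \langle p,p\rangle = 0\}$; by Cauchy--Schwarz $\mathcal{N} = \{p : \langle p,q\rangle = 0 \ \forall q\}$, and the identity $\langle zp,q\rangle = \Lambda(zp\overline{q}) = \langle p,\overline{z}q\rangle$ shows $\mathcal{N}$ is invariant under multiplication by $z$ and $\overline{z}$, hence an ideal. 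Thus $\mathcal{H} := \mathbb{C}[\overline{z},z]/\mathcal{N}$ is a Hilbert space with $\dim\mathcal{H} = \operatorname{rank}M(\infty) = r$, and $M_z[p] := [zp]$ is well defined. The same identity gives $M_z^{*} = M_{\overline{z}}$ and $M_zM_{\overline{z}} = M_{\overline{z}}M_z$, so $M_z$ is normal. Applying the finite-dimensional spectral theorem I would write $M_z = \sum_k w_k E_k$ with orthogonal spectral projections $E_k$; because $[1]$ generates $\mathcal{H}$ under $M_z$ and $M_{\overline{z}}$ and $M_z$ is normal, the eigenvalues $w_1,\dots,w_r$ are simple and number $\dim\mathcal{H} = r$. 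Finally, setting $\rho_k := \langle E_k1,1\rangle = \|E_k1\|^2 \ge 0$ and $\mu := \sum_k \rho_k\,\delta_{w_k}$, the computation $\int \overline{z}^{\,i}z^{\,j}\,d\mu = \sum_k \rho_k\overline{w}_k^{\,i}w_k^{\,j} = \langle M_z^{\,j}M_{\overline{z}}^{\,i}1,1\rangle = \Lambda(\overline{z}^{\,i}z^{\,j}) = \gamma_{ij}$ for all $i+j \le 2n$ shows that $\mu$ is an $r$-atomic, i.e. $\operatorname{rank}M(n)$-atomic, representing measure for $\gamma^{(2n)}$.
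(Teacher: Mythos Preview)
The paper does not supply its own proof of this theorem: it is quoted verbatim as \cite[Theorem~5.13]{CURTO1996SOLUTION} and used as a black box throughout Section~4. There is therefore nothing in the present paper to compare your argument against.

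That said, your outline tracks the original Curto--Fialkow proof closely. The necessity direction is exactly the standard Vandermonde factorization argument. For sufficiency you have correctly identified the architecture (flat propagation $\Rightarrow$ $M(\infty)$ of finite rank $\Rightarrow$ GNS quotient $\Rightarrow$ normal multiplication operator $\Rightarrow$ spectral measure), and you are also right that the propagation step is where the real work lies. Your sketch of that step, however, is not yet a proof: saying that ``multiplying a column relation by $Z$ or $\overline{Z}$ produces a relation one degree higher that must respect the Hankel/Toeplitz pattern'' is precisely the statement that needs justification, not a justification of it. In Curto--Fialkow this is handled by showing that the kernel of $M(n+1)$, viewed as polynomials, is closed under multiplication by $z$ and $\overline z$ modulo degree, and that the induced block $C = W^*M(n+1)W$ automatically has the required Toeplitz form; neither of these is automatic from Smul'jan alone. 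If you intend this as a complete proof you would need to fill in that consistency argument; as a roadmap it is accurate.
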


An important step in our approach is to  show that the Hermitian
matrix $W^*M(n)W$ is persymmetric, that is, it is symmetric across
its lower-left to upper-right diagonal. For this purpose, we
introduce first some additional notation.

We denote the successive columns of $W$ and $B$ (given as in
Expression \eqref{ext.1}) by $W_{\mid Z^{n+1}}, W_{\mid \overline{Z}
Z^n}, \ldots, W_{\mid \overline{Z}^{n+1}}$ and $B_{\mid Z^{n +1}},
B_{\mid \overline{Z} Z^n}, \ldots, B_{\mid \overline{Z}^{n+1}}$ ,
respectively.

Let us consider the $\frac{(n+1)(n+2)}{2}$-matrix built as follows,
\begin{equation*}
M_{\varphi}(n) := J_0 \oplus J_1 \oplus \dots \oplus J_n;
\end{equation*}
where $J_p=(\delta_{i+j, p})_{0\leq i,j\leq p}$  with $\delta_{i,j}$
is the Kronecker symbol given by $\delta_{l,k} = 1$ for $k=l$ and
zero otherwise. For example
$$
J_0=(1), \quad J_1=\left(\begin{matrix} 0 & 1 \\ 1 & 0
\end{matrix}\right) \text{ and } J_2=\left(\begin{matrix} 0 & 0 & 1 \\ 0 & 1 &
0 \\ 1 & 0 & 0
\end{matrix}\right).
$$

\begin{lem}\label{phi} Let $M_\varphi(n)$, $M(n)$ and $B_{\overline{Z}^{n -i} Z^i}$ ($i=0, \ldots, n$) be as above, then
\begin{enumerate}
 \item $(M_\varphi(n))^2= I$. \label{1}
  \item $(M_{\varphi}(n))^*=M_{\varphi}(n)$. \label{2}
  \item $M_{\varphi}(n)B_{\overline{Z}^i Z^{n -i}} = B_{\overline{Z}^{n -i} Z^i} \qquad (i=0, \ldots, n).$ \label{4}
  \item $M_{\varphi}(n)M(n)=\overline{M(n)}M_{\varphi}(n)$. \label{3}
\end{enumerate}
\end{lem}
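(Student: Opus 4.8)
The plan is to verify the four assertions essentially by unwinding the definitions of $M_\varphi(n)$, $M(n)$ and the columns $B_{\overline{Z}^i Z^{n-i}}$, exploiting throughout the block-diagonal structure of $M_\varphi(n)$ and the Toeplitz form of the blocks $M[i,j]$ noted just before the lemma. First I would record that each block $J_p = (\delta_{i+j,p})_{0\le i,j\le p}$ is the $(p+1)\times(p+1)$ reversal (anti-identity) matrix, so that for any vector $v=(v_0,\dots,v_p)$ one has $J_p v = (v_p,\dots,v_0)$; this is the single computational fact behind all four parts. Items \eqref{1} and \eqref{2} are then immediate: each $J_p$ is symmetric and satisfies $J_p^2 = I_{p+1}$ (reversing twice is the identity), and a direct sum of symmetric involutions is a symmetric involution, so $(M_\varphi(n))^* = M_\varphi(n)$ and $(M_\varphi(n))^2 = I$.

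For item \eqref{4}, I would index the entries of the column $B_{\overline{Z}^i Z^{n-i}}$ by the row labels $\overline{Z}^k Z^l$ with $k+l\le n$, so that this entry equals $\Lambda(\overline{z}^{i+l} z^{(n-i)+k}) = \gamma_{i+l,\,n-i+k}$. Applying $M_\varphi(n)$, which acts within each degree block of size $p+1$ by reversing the order of the $p+1$ monomials of that degree, sends the row label $\overline{Z}^k Z^l$ (with $k+l=p$) to $\overline{Z}^l Z^k$; hence the $\overline{Z}^k Z^l$-entry of $M_\varphi(n) B_{\overline{Z}^i Z^{n-i}}$ is $\gamma_{i+k,\,n-i+l}$, which is exactly the $\overline{Z}^k Z^l$-entry of $B_{\overline{Z}^{n-i} Z^i}$ (swap the roles of $i$ and $n-i$ in the formula above). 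Keeping the block/monomial bookkeeping straight here is the one place where care is needed, but there is no real obstacle.

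Item \eqref{3} is the substantive one. I would prove $M_\varphi(n) M(n) = \overline{M(n)}\, M_\varphi(n)$ by comparing entries in the monomial basis. The $(\overline{Z}^k Z^l,\ \overline{Z}^i Z^j)$-entry of $M(n)$ is $\gamma_{i+l,\,j+k}$. Left-multiplication by $M_\varphi(n)$ reverses within the row-degree block, replacing the row label $\overline{Z}^k Z^l$ by $\overline{Z}^l Z^k$, so the $(\overline{Z}^k Z^l, \overline{Z}^i Z^j)$-entry of $M_\varphi(n) M(n)$ is $\gamma_{i+k,\,j+l}$. On the other side, right-multiplication by $M_\varphi(n)$ reverses within the column-degree block, replacing the column label $\overline{Z}^i Z^j$ by $\overline{Z}^j Z^i$, so the $(\overline{Z}^k Z^l, \overline{Z}^i Z^j)$-entry of $\overline{M(n)}\,M_\varphi(n)$ is $\overline{\gamma_{j+l,\,i+k}}$; and by the Hermitian symmetry $\overline{\gamma}_{pq} = \gamma_{qp}$ of the moment data this equals $\gamma_{i+k,\,j+l}$. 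The two entries agree, which gives \eqref{3}. I expect the main (purely organizational) obstacle to be handling the fact that $M_\varphi(n)$ acts by degree-blocks while $M(n)$ is itself organized into the $M[i,j]$ blocks — so one must be careful that "reverse within a degree block" is applied to the correct index set — but once the action of $J_p$ as monomial-reversal is isolated at the start, each of the four parts reduces to a one-line index computation, with the Hermitian symmetry of $\gamma$ doing the real work in part \eqref{3}.
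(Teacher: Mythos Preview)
Your arguments for \eqref{1}, \eqref{2}, and \eqref{3} are correct. For \eqref{3} the paper argues block-wise, writing $M_\varphi(n) M(n) = [J_i\, M[i,j]] = [\overline{M[i,j]}\, J_j] = \overline{M(n)}\, M_\varphi(n)$ and leaving the middle identity $J_i\, M[i,j] = \overline{M[i,j]}\, J_j$ implicit; your entry-by-entry computation is exactly the unrolled verification of that identity, so the two approaches coincide.

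There is, however, a slip in your verification of \eqref{4}. You correctly obtain that the $\overline{Z}^k Z^l$-entry of $M_\varphi(n)\, B_{\overline{Z}^i Z^{n-i}}$ is $\gamma_{i+k,\, n-i+l}$, but your final claim that this equals the $\overline{Z}^k Z^l$-entry of $B_{\overline{Z}^{n-i} Z^i}$ is not right: substituting $i \mapsto n-i$ in your own formula $\gamma_{i+l,\, n-i+k}$ gives $\gamma_{(n-i)+l,\, i+k}$, which is the \emph{conjugate} of $\gamma_{i+k,\, n-i+l}$ (via $\overline{\gamma}_{pq} = \gamma_{qp}$), not the quantity itself. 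Concretely, with $n=2$ and $B$ as in \eqref{i.1}, applying $M_\varphi(2)$ to the first column $B_{Z^3}=(\gamma_{03},\gamma_{13},\gamma_{04},\gamma_{23},\gamma_{14},\gamma_{05})^T$ yields $(\gamma_{03},\gamma_{04},\gamma_{13},\gamma_{05},\gamma_{14},\gamma_{23})^T$, which equals $\overline{B_{\overline{Z}^3}}$ rather than $B_{\overline{Z}^3}$. So what your computation actually establishes is $M_\varphi(n)\, B_{\overline{Z}^i Z^{n-i}} = \overline{B_{\overline{Z}^{n-i} Z^i}}$. The paper simply declares \eqref{4} ``obvious'' and writes no details; if you keep your explicit verification, you should either flag the missing conjugate in the stated identity or acknowledge that the index bookkeeping, as written, does not close without it.
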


\begin{proof} The assertions \eqref{1}, \eqref{2} and \eqref{4} are obvious.
Only  the third assertion requires a proof. To this aim, we recall
that $M(n)=[M(i,j)]_{0\leq i,j\leq n}$, see \eqref{moment matrix}.
Therefore
\begin{eqnarray*}
  [M_{\varphi}(n)]M(n) &=& \left[\bigoplus_{i=0}^n J_i \right] [M(i,j)]_{i,j \leq n} \\
   &=& [J_i M(i,j)]_{i,j \leq n} \\
   &=& [\overline{M(i,j)} J_j]_{i,j  \leq n} \\
   &=&  [\overline{M(i,j)}]_{i,j  \leq n}\left[\bigoplus_{i=0}^n J_i
   \right]\\
   &=&\overline{M(n)}M_{\varphi}(n).
\end{eqnarray*}
\end{proof}

\begin{prop}\label{persymmetric}
 Let $n$ be a given integer and let $M(n)$ and $W$ be as above, then $W^*M(n)W$ is a Hermitian Persymmetric matrix.
\end{prop}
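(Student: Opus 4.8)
The plan is to establish persymmetry by exhibiting a single algebraic identity relating $M(n)$, $W$ and the flip operator $M_\varphi(n)$, and then to deduce the symmetry of $W^*M(n)W$ across its anti-diagonal from it. Recall that persymmetry of a $(n+1)\times(n+1)$ matrix $T$ indexed by the monomials $Z^{n+1}, \overline{Z}Z^n, \dots, \overline{Z}^{n+1}$ is exactly the statement that $M_\varphi(n)\, T\, M_\varphi(n) = T^t$ (transpose without conjugation), since $M_\varphi(n)$ is the permutation matrix reversing the order of those $n+2$ columns. So the goal reduces to proving
\begin{equation}\label{persym-goal}
M_\varphi(n)\,(W^*M(n)W)\,M_\varphi(n) = (W^*M(n)W)^t.
\end{equation}

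First I would record the defining relation $B = M(n)W$ together with its ``flipped'' counterpart. By Lemma~\ref{phi}\eqref{4}, applying $M_\varphi(n)$ to the columns of $B$ permutes them by the reversal, i.e. $M_\varphi(n)\,B = B\,M_\varphi(n)$ where on the left $M_\varphi(n)$ is the size-$\tfrac{(n+1)(n+2)}{2}$ flip and on the right it is the size-$(n+2)$ flip (I would be careful to keep these two flip matrices notationally distinct, calling the smaller one $J$ or $M_\varphi$ at the appropriate level, but the paper's convention lets me reuse the symbol). Next, using Lemma~\ref{phi}\eqref{3}, $M_\varphi(n)M(n) = \overline{M(n)}\,M_\varphi(n)$, and since $M(n)$ is Hermitian, $\overline{M(n)} = M(n)^t$. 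Combining these: from $B = M(n)W$ I get
\begin{equation*}
B\,M_\varphi = M_\varphi B = M_\varphi M(n) W = M(n)^t M_\varphi W,
\end{equation*}
so the ``flipped'' factorization reads $B\,M_\varphi = M(n)^t\,(M_\varphi W)$. On the other hand, taking conjugate-transpose of $B = M(n)W$ and using $M(n)=M(n)^*$ gives $B^* = W^* M(n)$, hence $\overline{B} = W^t \overline{M(n)} = W^t M(n)^t$, i.e. $\overline{B} = (M(n)W^{\,*t})^{\,?}$ — I would rather work directly with $B^t = (M(n)W)^t = W^t M(n)^t$. The point of assembling all of this is to be able to compute $W^*M(n)W$ and its flip/transpose through $B$.

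The clean way to finish is as follows. Write $N := W^*M(n)W$. Since $B = M(n)W$ and $M(n)$ is Hermitian, we have $N = W^*B = (M(n)W)^*W^{\,? }$; more usefully, $N = W^* M(n) W$ is the unique matrix with $N = W^*B$, and also $N = B^*W$ (because $W^*B = W^*M(n)W = (M(n)W)^*{}^{*}\!\cdots$) — concretely $B^*W = W^*M(n)^*W = W^*M(n)W = N$. Now compute the transpose: $N^t = (W^*B)^t = B^t \overline{W}$. Using $B^t = W^t M(n)^t = W^t M_\varphi M(n) M_\varphi$ (Lemma~\ref{phi}\eqref{3} rearranged, plus $M_\varphi^2=I$) and the flipped factorization $M_\varphi B = M(n)^t M_\varphi W$ hence $M_\varphi M(n)^t{}^{-}\!\cdots$, one threads the flip matrices through until $N^t$ is expressed as $M_\varphi\,(\text{something})\,M_\varphi$, and that something is forced to be $N$ itself by uniqueness of $W^*M(n)W$ under the Douglas constraint (Smul'jan: $W^*M(n)W$ does not depend on the choice of $W$ with $B=M(n)W$, and $M_\varphi W$ is such a choice for the flipped data when combined with Lemma~\ref{phi}). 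That yields exactly \eqref{persym-goal}, and Hermitianity of $N$ is immediate since $N^* = W^* M(n)^* W = W^* M(n) W = N$.

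The main obstacle I anticipate is purely bookkeeping: keeping the two differently-sized flip matrices $M_\varphi(n)$ (acting on the full monomial basis through degree $n$, size $\tfrac{(n+1)(n+2)}{2}$) and the degree-$(n+1)$ flip $J_{n+1}$ (acting on the $n+2$ columns of $B$ and $W$) straight, and verifying that the block identity $J_i M(i,j) = \overline{M(i,j)} J_j$ from the proof of Lemma~\ref{phi} propagates correctly to the rectangular matrix $B$ — i.e. that $B$'s rows are governed by the degree-$\le n$ flip while its columns are governed by the degree-$(n+1)$ flip. Once that compatibility is pinned down, the identity $M_\varphi B = \overline{B}{}^{\,?} J_{n+1}$ of Lemma~\ref{phi}\eqref{4} does all the work and the conclusion drops out; no genuinely hard estimate or construction is involved.
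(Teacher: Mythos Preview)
Your overall strategy --- reduce persymmetry to the matrix identity $J_{n+1}\,N\,J_{n+1}=N^t$ for $N:=W^*M(n)W$, and prove it via the flip relations of Lemma~\ref{phi} together with the Smul'jan/Douglas uniqueness of $W^*M(n)W$ --- is correct, and it is essentially the paper's own argument reorganized globally rather than column-by-column. The paper shows $c_{n-j,n-i}=c_{ij}$ by chasing one entry at a time through the identities of Lemma~\ref{phi}; your version packages the same identities into a single matrix equation.

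The execution, however, has a specific error that accounts for the question marks in your sketch. The columnwise identity in Lemma~\ref{phi}\eqref{4} does \emph{not} assemble into $M_\varphi(n)\,B = B\,J_{n+1}$; a direct check (for instance on the $Z^3$ and $\overline{Z}^3$ columns of the $B$ displayed in \eqref{i.1}) shows that the correct matrix form carries a complex conjugate: $M_\varphi(n)\,B = \overline{B}\,J_{n+1}$. Consequently the alternative solution of $B=M(n)W$ is not $M_\varphi(n)W$ but rather $W':=M_\varphi(n)\,\overline{W}\,J_{n+1}$. Indeed, Lemma~\ref{phi}\eqref{3} gives $M(n)M_\varphi(n)=M_\varphi(n)\overline{M(n)}$, so $M(n)W'=M_\varphi(n)\,\overline{M(n)W}\,J_{n+1}=M_\varphi(n)\,\overline{B}\,J_{n+1}=B$. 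Smul'jan's uniqueness then yields
\begin{align*}
N=(W')^*M(n)W' &=J_{n+1}\,W^{t}\,\bigl(M_\varphi(n)M(n)M_\varphi(n)\bigr)\,\overline{W}\,J_{n+1}\\
&=J_{n+1}\,\overline{W^*M(n)W}\,J_{n+1}=J_{n+1}\,N^{t}\,J_{n+1},
\end{align*}
using $M_\varphi(n)M(n)M_\varphi(n)=\overline{M(n)}$ and $\overline{N}=N^t$ (Hermitianity). This is exactly the persymmetry relation you were aiming for. Once the conjugate is inserted and $W'$ is identified correctly, every ``$?$'' and ``$\cdots$'' in your write-up disappears and the proof is complete --- in fact a touch cleaner than the paper's entrywise computation.
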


\begin{proof} Setting $W^*M(n)W= (c_{ij})_{0\leq i,j\leq n}$, then we have
\begin{equation}\label{per.pr.1}
c_{n-j,n-i}= W_{\overline{Z}^{n -j} Z^j}^* M(n) W_{\overline{Z}^{n
-i} Z^i} .
\end{equation}
By multiplying left  both sides of the fourth equation in Lemma
\ref{phi} by $M_{\varphi}(n)$ we obtain
\begin{equation}\label{per.pr.2}
M_{\varphi}(n) M_{\varphi}(n) M(n) = M_{\varphi}(n) \overline{M(n)}
M_{\varphi}(n).
\end{equation}
and hence, by applying Lemma \ref{phi}-(1), we have
\begin{equation}\label{per.pr.3}
M(n) = M_{\varphi}(n) \overline{M(n)} M_{\varphi}(n).
\end{equation}
It follows, from \eqref{per.pr.1} and \eqref{per.pr.3}, that
\begin{equation}
c_{n-j, n-i} = W_{\overline{Z}^{n -j} Z^j}^* M_{\varphi}(n)
\overline{M(n)} M_{\varphi}(n) W_{\overline{Z}^{n -i} Z^i}.
\end{equation}
The fact that $M_{\varphi}(n)$ is self-adjoint allows to write
\begin{equation}\label{per.pr.4}
c_{n -j, n -i} = \left( M_{\varphi}(n) W_{\overline{Z}^{n -j} Z^j}
\right)^* \overline{M(n)} \left( M_{\varphi}(n) W_{\overline{Z}^{n
-i} Z^i}\right).
\end{equation}
By using  the assertions \eqref{4} and \eqref{3}, in Lemma
\ref{phi}, we deduce that:
\begin{equation*}
\overline{M(n)} M_{\varphi}(n) W_{\overline{Z}^{n -i} Z^i} =
M_{\varphi}(n) M(n) W_{\overline{Z}^{n -i} Z^i} = M_{\varphi}(n)
B_{\overline{Z}^{n -i} Z^i}= B_{\overline{Z}^i Z^{n -i}}.
\end{equation*}
Therefore, \eqref{per.pr.4} implies that
\begin{equation*}\begin{aligned}
c_{n -j, n -i} &= ( M_{\varphi}(n) W_{\overline{Z}^{n -j} Z^j} )^*  B_{\overline{Z}^i Z^{n -i}}\\
               &= W_{\overline{Z}^{n -j} Z^j}^*  M_{\varphi}(n) M(n) W_{\overline{Z}^i Z^{n -i}}\\
               &= ((M(n) M_{\varphi}(n))^* W_{\overline{Z}^{n -j} Z^j})^* W_{\overline{Z}^i Z^{n -i}}\\
               &= (M_{\varphi}(n) M(n) W_{\overline{Z}^{n -j} Z^j})^* W_{\overline{Z}^i Z^{n -i}}\\
               &= (\overline{M(n)} M_{\varphi}(n) W_{\overline{Z}^{n -j} Z^j})^* W_{\overline{Z}^i Z^{n -i}}\\
               &= (M(n) W_{\overline{Z}^j Z^{n -j}})^* W_{\overline{Z}^i Z^{n -i}}\\
               &= W_{\overline{Z}^j Z^{n -j}}^* M(n) W_{\overline{Z}^i Z^{n -i}}\\
               &= c_{i, j}.
\end{aligned}
\end{equation*}
This concludes the proof of the Proposition \ref{persymmetric}.
\end{proof}

 \section{ Complex-valued recursive bi-sequences}
Let $\gamma^{(n)} \equiv \{\gamma_{ij}\}_{i+ j \leq n}$, with
$\overline{\gamma_{ij}} = \gamma_{ji}$ and $n \in \mathbb{N} \cup \{
+\infty\}$, be a given complex-valued sequence and let
$P_{\overline{z}^e z^{d -e}}
=\sum\limits_{\small{\begin{smallmatrix} l +k \leq d\\ (l ,k) \neq
(e, d -e) \end{smallmatrix}}} a_{lk} \overline{z}^l z^k $ be in $
\mathbb{C}_d[\overline{z}, z]$, the vector space of polynomials in
two variables with complex coefficients and total degree less than
or equal to $d$ (we assume that $d \leq n$). The sequence
$\gamma^{(n)}$ is said to be recursive, associated with a generating
polynomial $\overline{z}^e z^{d -e} -P_{\overline{z}^e z^{d -e}}$,
if
\begin{equation}\label{def.RSI}
\gamma_{e +i, d -e +j} = \Lambda_{\gamma^{(n)}} (\overline{z}^i z^j
P_{\overline{z}^e z^{d -e}}), \phantom{tttt} \text{ for all } i +j
\leq n -d,
\end{equation}
or, equivalently, if
\begin{equation}\label{def.RS-II}
\gamma_{e +i, d -e +j} = \sum\limits_{\begin{smallmatrix} l +k \leq
d\\ (l ,k) \neq (e, d -e) \end{smallmatrix}} a_{lk} \gamma_{l +i, k
+j} \phantom{ttttttt} (i +j \leq n -d).
\end{equation}

We notice that, because of the equality $\overline{\gamma_{ij}} =
\gamma_{ji}$, Equation \eqref{def.RS-II} is equivalent to the
following one:
\begin{equation}\label{def.RS-III}
\gamma_{d -e +i, e +j} = \sum\limits_{\begin{smallmatrix} l +k \leq
d\\ (l ,k) \neq (e, d -e) \end{smallmatrix}} \overline{a_{lk}}
\gamma_{k +i, l +j},
\end{equation} for all integers $i$ and $j$, with $i + j \leq n -d$,\\
Therefore, $\overline{z}^{d -e} z^e -P_{\overline{z}^{d -e} z^e}$
(where $P_{\overline{z}^{d -e} z^e} = \overline{P_{\overline{z}^e
z^{d -e}}}$) is, also, a generating polynomial, associated with
$\gamma^{(n)}$; that is,
\begin{equation}\label{def.RS-IV}
\gamma_{d -e +i, e +j} = \Lambda_{\gamma^{(n)}} (\overline{z}^i z^j
P_{\overline{z}^{d -e} z^e}), \phantom{tttttttttttt} i +j \leq n -d.
\end{equation}

The following proposition provides a connection, via $\Lambda$,
between the polynomials  $P_{\overline{z}^f z^{f+1}}$ and
$P_{\overline{z}^{f+1} z^f}$.
\begin{prop}\label{prop.lien}
Let $\gamma^{(n)} \equiv \{ \gamma_{ij} \}_{i+j \leq n}$ be a
recursive bi-sequence and let $\overline{z}^f z^{f+1}
-P_{\overline{z}^f z^{f+1}}$ be an  associated generating
polynomial, then
\begin{equation*}
\Lambda_{\gamma^{(n)}}(\overline{z}^{l+1} z^k P_{\overline{z}^f
z^{f+1}}) = \Lambda_{\gamma^{(n)}}(\overline{z}^l z^{k+1}
P_{\overline{z}^{f+1} z^f}), \phantom{tttttt} l +k \leq n -2f -2.
\end{equation*}
\end{prop}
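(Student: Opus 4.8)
The plan is to evaluate each side separately using the recursiveness relation \eqref{def.RSI} and its conjugate counterpart \eqref{def.RS-IV}, and to observe that both sides reduce to the same single moment $\gamma_{f+l+1,\,f+k+1}$.

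First I would record that the generating polynomial $\overline{z}^f z^{f+1} - P_{\overline{z}^f z^{f+1}}$ has bidegree $(f,f+1)$, hence total degree $d = 2f+1$. Applying \eqref{def.RSI} with $(i,j)=(l+1,k)$ is legitimate exactly when $i+j = l+k+1 \leq n-d = n-2f-1$, that is, precisely under the standing hypothesis $l+k \leq n-2f-2$; it yields $\Lambda_{\gamma^{(n)}}(\overline{z}^{l+1} z^{k} P_{\overline{z}^f z^{f+1}}) = \gamma_{f+l+1,\,f+1+k}$. For the right-hand side I would invoke the fact, established just before the statement (using $\overline{\gamma_{ij}} = \gamma_{ji}$), that $\overline{z}^{f+1} z^{f} - P_{\overline{z}^{f+1} z^{f}}$ is again a generating polynomial for $\gamma^{(n)}$, so that \eqref{def.RS-IV} holds with $e=f$ and $d-e=f+1$. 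Applying it with $(i,j)=(l,k+1)$ — again the constraint $i+j = l+k+1 \leq n-d$ is exactly $l+k \leq n-2f-2$ — gives $\Lambda_{\gamma^{(n)}}(\overline{z}^{l} z^{k+1} P_{\overline{z}^{f+1} z^{f}}) = \gamma_{f+1+l,\,f+k+1}$. Since $\gamma_{f+l+1,\,f+1+k} = \gamma_{f+1+l,\,f+k+1}$, the two expressions agree, which is the claim.

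There is no genuine obstacle in this argument; it is a direct computation. The only points that require a little care are bookkeeping in nature: first, that in both applications of the recursiveness relation the index range matches \emph{exactly} the hypothesis $l+k \leq n-2f-2$ (equivalently, that the polynomials $\overline{z}^{l+1} z^{k} P_{\overline{z}^f z^{f+1}}$ and $\overline{z}^{l} z^{k+1} P_{\overline{z}^{f+1} z^{f}}$ both have total degree $l+k+2f+2 \leq n$, so that $\Lambda_{\gamma^{(n)}}$ is defined on them); and second, that one is entitled to use \eqref{def.RS-IV} for the conjugate monomial $\overline{z}^{f+1} z^{f}$, which is exactly the content of \eqref{def.RS-III}--\eqref{def.RS-IV}. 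Alternatively, one could expand $P_{\overline{z}^f z^{f+1}} = \sum a_{lk}\overline{z}^l z^k$ and $P_{\overline{z}^{f+1} z^f} = \overline{P_{\overline{z}^f z^{f+1}}}$ and verify the identity term-by-term via \eqref{def.RS-II} and \eqref{def.RS-III}, but the one-line reduction to a common moment is cleaner.
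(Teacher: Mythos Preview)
Your proposal is correct and follows essentially the same approach as the paper: both sides are reduced to the single moment $\gamma_{f+l+1,\,f+k+1}$ via the recursiveness relations. The only cosmetic difference is that the paper re-derives the conjugate relation in-line (writing $\gamma_{f+l+1,f+k+1}=\overline{\gamma_{f+k+1,f+l+1}}=\overline{\Lambda(\overline{z}^{k+1}z^{l}P_{\overline{z}^f z^{f+1}})}$ and then conjugating), whereas you invoke \eqref{def.RS-IV} directly; this is a minor stylistic variation, not a different argument.
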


\begin{proof}
 For all integers $l$ and $k$, with $l +k \leq n -2f -2$, we have
\begin{equation*}\begin{aligned}
\Lambda_{\gamma^{(n)}}(\overline{z}^{l+1} z^k P_{\overline{z}^f z^{f+1}}) &= \gamma_{f+l+1, f+k+1}\\
                                                            &= \overline{\gamma_{f+k+1, f+l+1}}\\
                                                            &= \overline{\Lambda_{\gamma^{(n)}}(\overline{z}^{k+1} z^l P_{\overline{z}^f z^{f+1}})}\\
                                                            &= \Lambda_{\gamma^{(n)}}(\overline{z}^l z^{k+1} P_{\overline{z}^{f+1} z^f}).
\end{aligned}
\end{equation*}
\end{proof}

It is well known that the (classical singly  indexed recursive
sequence can be defined by the initial data and the, associated
recurrence relation (or, characteristic polynomial), see
\cite{dubeau1994weighted}.  In a similar way, one can define
recursive bi-sequences   as observed below.

\begin{rem}\label{rem.construct.}
\begin{itemize}
\item [$i)$]  A generating polynomial $z^e -P_{z^e}$ (or, equivalently, $\overline{z}^e -P_{\overline{z}^e}$),
with $\deg P_{z^e} < e$, together with the initial data
$\{\gamma_{ij}\}_{i, j < e}$ and the equality
$\overline{\gamma_{ij}} = \gamma_{ji}$, are said to define the
sequence $\gamma^{(n)}$.
\item [$ii)$] For  a generating polynomial $\overline{z} z^{e -1} -P_{\overline{z} z^{e -1}}$,
with $\deg P_{\overline{z} z^{e -1}} < e$, we need (all) the data
$\{\gamma_{ij}\}_{i, j < e} \cup \{ \gamma_{0j} \}_{j = e, \ldots,
n}$ and the equality $\overline{\gamma_{ij}} = \gamma_{ji}$ to
define  the recursive bi-sequence $\gamma^{(n)}$.
\end{itemize}
\end{rem}

In the next lemmas, we provide useful results for solving the
quintic moment problem.

\begin{lem}\label{lem.II-1}
Let $\gamma^{(8)} \equiv \{\gamma_{ij}\}_{i+ j \leq 8}$, with
$\overline{\gamma_{ij}} = \gamma_{ji}$, be a truncated bi-sequence
and let $z^4 -P_{z^4}$ (where $P_{z^4} = \beta z^3 + R_{z^4}$ and
$R_{z^4} \in \mathbb{C}_2[\overline{z}, z]$) be an associated
generating polynomial. Assume that $\overline{z} z^2-P_{\overline{z}
z^2}$ (where $P_{\overline{z} z^2} = \alpha z^3 + R_{\overline{z}
z^2}$, $\alpha \neq 0$ and $R_{\overline{z} z^2} \in
\mathbb{C}_2[\overline{z}, z]$) is a generating polynomial for
$\gamma^{(6)} \cup \{ \gamma_{34}, \gamma_{43}\}$, then
$\overline{z} z^2 -P_{\overline{z} z^2}$ is a generating polynomial
for $\gamma^{(8)}$.
\end{lem}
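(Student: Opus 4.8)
The plan is to show that the generating relation $\gamma_{1+i,2+j} = \Lambda_{\gamma^{(8)}}(\overline{z}^i z^j P_{\overline{z}z^2})$ holds for all $i+j\le 6$, given that it holds for $i+j\le 4$ (the hypothesis on $\gamma^{(6)}\cup\{\gamma_{34},\gamma_{43}\}$). So the work is purely to upgrade the range $i+j\le 4$ to $i+j\le 6$, using the extra leverage provided by the degree-4 relation $z^4 = P_{z^4} = \beta z^3 + R_{z^4}$, which propagates through $\gamma^{(8)}$. First I would record what each generating polynomial buys us: $\overline{z}z^2 - P_{\overline{z}z^2}$ with $\alpha\ne 0$ lets me solve for $z^3$ in terms of $\overline{z}z^2$ and lower-degree monomials, namely $z^3 = \alpha^{-1}(\overline{z}z^2 - R_{\overline{z}z^2})$; combined with the conjugate relation (Proposition 3.1 / equation \eqref{def.RS-III}) I also get a relation expressing $\overline{z}^3$. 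Meanwhile $z^4 - \beta z^3 - R_{z^4}$ being a generating polynomial for $\gamma^{(8)}$ means $\gamma_{i,4+j} = \Lambda(\overline{z}^i z^j(\beta z^3 + R_{z^4}))$ for $i+j\le 4$.

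The key steps, in order. (1) Fix $i+j = 5$ or $6$ with $i\ge 1$, so the target moment is $\gamma_{i,2+j}$ with $2+j\ge 3$, hence "high degree"; I want to show $\gamma_{i,2+j} - \Lambda(\overline{z}^i z^j P_{\overline{z}z^2}) = 0$. (2) The difference can be rewritten as $\Lambda(\overline{z}^{i-1}z^j\,[\,\overline{z}z^2 - P_{\overline{z}z^2}\,]\cdot z^{\,?})$ — more precisely I would peel off factors of $z$ (or $\overline z$) from $z^{2+j}$ until I reach an exponent pattern already covered by the induction hypothesis, each peeling step being legitimate because multiplying the generating polynomial $\overline{z}z^2 - P_{\overline{z}z^2}$ by a monomial of appropriately bounded degree and applying $\Lambda$ gives zero whenever the total degree stays $\le 8$. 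The obstruction is exactly that for $i+j\in\{5,6\}$ some of the needed intermediate identities involve monomials of degree $7$ or $8$, which are \emph{not} automatically zero from the degree-$6$ hypothesis alone. (3) This is where $z^4 = \beta z^3 + R_{z^4}$ enters: whenever a high power $z^4, z^5, z^6$ appears I substitute using the degree-4 relation to lower the $z$-exponent, trading degree for terms that do lie in the reach of the $\overline{z}z^2$-relation; iterating, every degree-$7$ or degree-$8$ monomial expression gets reduced. I would also use Proposition 3.1 to pass between $P_{\overline{z}z^2}$ and $P_{\overline{z}^2 z}=\overline{P_{\overline{z}z^2}}$ so that relations on the "$z$-heavy" side and the "$\overline z$-heavy" side are interchangeable, and $\overline{\gamma_{ij}}=\gamma_{ji}$ to fold the case $j\ge 1,\ i=0$ back onto a case with $i\ge 1$.

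The main obstacle I anticipate is bookkeeping in step (2)–(3): one must verify that the chain of monomial multiplications used to close the gap never leaves the range $i+j\le 8$ in which the available relations are valid, and that the substitutions from $z^4-P_{z^4}$ and from $\overline{z}z^2-P_{\overline{z}z^2}$ can be interleaved so that the reduction terminates. Concretely, I expect the proof to split into the finitely many cases $(i,j)$ with $i+j=5$ and $i+j=6$, $i\ge 1$ (plus the reflected cases), and in each to exhibit an explicit finite sequence of applications of \eqref{def.RSI}, \eqref{def.RS-III}, Proposition 3.1, and the degree-4 relation that collapses $\gamma_{i,2+j} - \Lambda(\overline{z}^i z^j P_{\overline{z}z^2})$ to a combination of already-vanishing quantities. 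The role of the hypothesis $\alpha\ne 0$ is essential throughout: it is what makes $\overline{z}z^2 - P_{\overline{z}z^2}$ solvable for $z^3$, and hence what lets a $z^3$ appearing inside $R_{z^4}$-substituted expressions be eliminated rather than merely shuffled.
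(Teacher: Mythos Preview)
Your plan is essentially the paper's own argument: a finite case-by-case verification in which one applies the $z^4$-relation to lower high $z$-powers, then uses the $\overline{z}z^2$-relation (with $\alpha\ne 0$ to solve for $z^3$) and its conjugate via Proposition~\ref{prop.lien}, iterating until every term lands in the already-established range; the paper packages this bookkeeping by introducing an auxiliary sequence $\widehat\gamma^{(8)}$ defined so as to satisfy the $\overline{z}z^2$-relation and then proving $\widehat\gamma^{(8)}=\gamma^{(8)}$ entry by entry, which is exactly your ``interleaving substitutions'' in slightly different dress. Two index corrections to make before you write it out: the target range is $i+j\le 5$ (not $6$, since $\gamma_{1+i,2+j}$ must lie in $\gamma^{(8)}$), and the hypothesis gives the relation only for $i+j\le 3$ together with $(i,j)\in\{(2,2),(3,1),(1,3)\}$, not for all of $i+j\le 4$.
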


\begin{proof} We have $z^4 -P_{z^4}$ is  a generating polynomial for
$\gamma^{(8)}$, that is,
\begin{equation}\label{pr.1;lem.II-1}
\gamma_{i ,j +4} = \Lambda_{\gamma^{(8)}} (\overline{z}^i z^j
P_{z^4}) = \beta \gamma_{i, j +3} +\Lambda_{\gamma^{(8)}}
(\overline{z}^i z^j R_{z^4}), \phantom{ttttt} i +j \leq 4.
\end{equation}
As showing in \eqref{def.RS-IV}, the last equality
\eqref{pr.2;lem.II-1} is equivalent to
\begin{equation}\label{pr.1-1;lem.II-1}
\gamma_{i +4, j} = \Lambda_{\gamma^{(8)}} (\overline{z}^i z^j
P_{\overline{z}^4}) = \overline{\beta} \gamma_{i +3, j}
+\Lambda_{\gamma^{(8)}} (\overline{z}^i z^j R_{\overline{z}^4}),
\phantom{ttttt} i +j \leq 4;
\end{equation}
where $\overline{P_{z^4}} := P_{\overline{z}^4} = \overline{\beta} \overline{z}^3 + \overline{R_{\overline{z} z^2}}$.\\
Also, the polynomial $\overline{z} z^2 -P_{\overline{z} z^2}$ is a
generating one for $\gamma^{(6)} \cup \{ \gamma_{34},
\gamma_{43}\}$; that is, for all $i +j \leq 3$  and $(i, j) = (2, 2)
, (3, 1)$:
\begin{equation}\label{pr.2;lem.II-1}
\gamma_{i +1 ,j +2} = \Lambda_{\gamma^{(8)}} (\overline{z}^i z^j
P_{\overline{z} z^2}) = \alpha \gamma_{i ,j +3}
+\Lambda_{\gamma^{(8)}} (\overline{z}^i z^j R_{\overline{z} z^2}).
\end{equation}
or, equivalently, for $i +j \leq 3$ and  $(i, j) = (2, 2) , (1, 3)$;
\begin{equation}\label{pr.2-2;lem.II-1}
\gamma_{i +2 ,j +1} = \Lambda_{\gamma^{(8)}} (\overline{z}^i z^j
P_{\overline{z}^2 z^1}) = \overline{\alpha} \gamma_{i +3, j}
+\Lambda_{\gamma^{(8)}} (\overline{z}^i z^j R_{\overline{z}^2 z}),
\end{equation}
where $P_{\overline{z}^2 z} := \overline{P_{\overline{z} z^2}} = \overline{\beta} \overline{z}^3 + R_{\overline{z}^2 z}$, see \eqref{def.RS-IV}.\\
We have to show that \eqref{pr.2;lem.II-1} remains valid for all
integers $i$ and $j$, with $i +j \leq 5$. To this end we consider
the recursive bi-sequence $\widehat{\gamma}^{(8)} \equiv
\{\widehat{\gamma}_{ij} \}_{i+j \leq 8}$ defined by
  \begin{equation}\label{pr.3;lem.II-1}
  \begin{cases}
  \widehat{\gamma}_{i +1, j +2} &= \Lambda_{\widehat{\gamma}^{(8)}}( \overline{z}^i z^j P_{\overline{z} z^2}) \qquad ( i +j \leq 5),\\
  \widehat{\gamma}_{i, j} &= \gamma_{i, j} \qquad \textrm{ otherwise
  };
  \end{cases}
  \end{equation}
and we will show that $\widehat{\gamma}^{(8)} = \gamma^{(8)}$.
Notice that since $\overline{z} z^2 -P_{\overline{z} z^2}$ is a
generating polynomial for $\widehat{\gamma}^{(8)}$, then
$\overline{z}^2 z -P_{\overline{z}^2 z}$ is an other one. Thus
\begin{equation}\label{pr.33;lem.II-1}
  \widehat{\gamma}_{i +2, j +1} = \Lambda_{\widehat{\gamma}^{(8)}}( \overline{z}^i z^j P_{\overline{z}^2 z}) \phantom{texta} ( i +j \leq 5).
\end{equation}
It follows from \eqref{pr.2;lem.II-1} and \eqref{pr.3;lem.II-1}
that, for $n +m \leq 6$, $n = 0$ and $(n, m) = (3, 4) , (4, 3)$:
\begin{equation}\label{pr.4;lem.II-1}
\gamma_{nm} = \Lambda_{\gamma^{(8)}}(\overline{z}^n z^m) =
\Lambda_{\widehat{\gamma}^{(8)}}(\overline{z}^n z^m) :=
\widehat{\gamma}_{nm}.
\end{equation}
Remark that if $\widehat{\gamma}_{nm} = \gamma_{nm}$ then
$\widehat{\gamma}_{mn} = \overline{\widehat{\gamma}_{nm}} =
\overline{\gamma_{nm}} = \gamma_{mn}$.

Therefore, we need to show \eqref{pr.4;lem.II-1}, only, for the
integers $n$ and $m$ with $(n, m) = (2, 5), (1, 6); (1, 7), (2, 6),
(3, 5), (4, 4)$.

\begin{equation}\label{e(25)}\begin{aligned}
\gamma_{25} &= \Lambda_{\gamma^{(8)}}(\overline{z}^2 z P_{z^4}), &\text{utilizing } \eqref{pr.1;lem.II-1},\\
&= \Lambda_{\gamma^{(8)}}(P_{\overline{z}^2 z} P_{z^4}), &\text{employ } \eqref{pr.2-2;lem.II-1} \text{ and } \deg P_{z^4} \leq 3,\\
&= \overline{\alpha} \Lambda_{\gamma^{(8)}}(\overline{z}^3 P_{z^4}) + \Lambda_{\gamma^{(8)}}(R_{\overline{z}^2 z} P_{z^4})\\
&= \overline{\alpha} \gamma_{34} + \Lambda_{\gamma^{(8)}}( z^4 R_{\overline{z}^2 z}), &\text{ applying } \eqref{pr.1;lem.II-1},\\
&= \overline{\alpha} \widehat{\gamma}_{34} + \Lambda_{\widehat{\gamma}^{(8)}} ( z^4 R_{\overline{z}^2 z}), &\text{use } \deg z^4 R_{\overline{z}^2 z} \leq 6 \text{ and } \eqref{pr.4;lem.II-1},\\
&= \Lambda_{\widehat{\gamma}^{(8)}} (\overline{\alpha} \overline{z}^3 z^4 + z^4 R_{\overline{z}^2 z})\\
&= \Lambda_{\widehat{\gamma}^{(8)}} (z^4 P_{\overline{z}^2 z})\\
&= \Lambda_{\widehat{\gamma}^{(8)}} (\overline{z} z^3 P_{z^2\overline{z}}), &\text{ according to Proposition \ref{prop.lien}},\\
&= \widehat{\gamma}_{25}, &\text{ from } \eqref{pr.33;lem.II-1}.
  \end{aligned}
  \end{equation}

\begin{equation}\label{e(16)}\begin{aligned}
  \gamma_{16} &= \Lambda_{\gamma^{(8)}}(\overline{z} z^2 P_{z^4}), &\text{use} \eqref{pr.1;lem.II-1},\\
              &= \Lambda_{\gamma^{(8)}}(P_{\overline{z} z^2} P_{z^4}), &\text{ employ } \eqref{pr.2;lem.II-1} \text{ and } \deg P_{z^4} \leq 3,\\
              &= \Lambda_{\gamma^{(8)}}(\alpha z^3 P_{z^4} + R_{\overline{z} z^2} P_{z^4})\\
              &= \alpha \gamma_{07} + \Lambda_{\gamma^{(8)}}( z^4 R_{\overline{z} z^2}), &\text{ utilizing } \eqref{pr.1;lem.II-1},\\
              &= \alpha \widehat{\gamma}_{07} + \Lambda_{\widehat{\gamma}^{(8)}} ( z^4 R_{\overline{z} z^2}), &\text{ using } \eqref{pr.4;lem.II-1} \text{ and } \deg z^4 R_{\overline{z} z^2} \leq 6,\\
              &= \Lambda_{\widehat{\gamma}^{(8)}} (\alpha z^7 + z^4 R_{\overline{z} z^2})\\
              &= \Lambda_{\widehat{\gamma}^{(8)}} (z^4 P_{\overline{z} z^2})\\
              &= \widehat{\gamma}_{16}, &\text{ according to } \eqref{pr.3;lem.II-1}.
  \end{aligned}
  \end{equation}
  Thus, the equality \eqref{pr.4;lem.II-1} is valid for every integer $n$ and $m$ with $n +m \leq 7$. In other words,
  \begin{equation}\label{pr.44;lem.II-1}
\gamma_{nm} = \Lambda_{\gamma^{(8)}}(\overline{z}^n z^m) =
\Lambda_{\widehat{\gamma}^{(8)}}(\overline{z}^n z^m) :=
\widehat{\gamma}_{nm} \phantom{tttttttttttt}
 (n +m \leq 7).
\end{equation}
And thus one can generalize the relation \eqref{pr.2;lem.II-1} as
follows
\begin{equation}\label{pr.5;lem.II-1}
\gamma_{i +1 ,j +2} = \Lambda_{\gamma^{(8)}} (\overline{z}^i z^j
P_{\overline{z} z^2}) = \alpha \gamma_{i ,j +3}
+\Lambda_{\gamma^{(8)}} (\overline{z}^i z^j R_{\overline{z} z^2})
\phantom{ttttt} ( i +j \leq 4).
\end{equation}

  Now, let us show \eqref{pr.4;lem.II-1} in the remaining cases ($n + m = 8$).
  \begin{equation}\label{e(08)}
   \gamma_{08} = \widehat{\gamma}_{08}, \phantom{tttt} \text{ by the construction of $\widehat{\gamma}^{(8)}$, see \eqref{pr.3;lem.II-1}}.
   \end{equation}

   \begin{equation}\label{e(17)}\begin{aligned}
  \gamma_{17} &= \Lambda_{\gamma^{(8)}}(\overline{z} z^3 P_{z^4}), &\text{ according to } \eqref{pr.1;lem.II-1}\\
              &= \Lambda_{\gamma^{(8)}}(z P_{z^4} P_{\overline{z} z^2}), &\text{ because } \deg z P_{z^4} \leq 4 \text{ and } \eqref{pr.5;lem.II-1},\\
              &= \Lambda_{\gamma^{(8)}}(z^5 P_{\overline{z} z^2}), &\text{ utilizing } \eqref{pr.1;lem.II-1}\\
              &= \alpha \Lambda_{\gamma^{(8)}}(z^8) + \Lambda_{\gamma^{(8)}}( z^5 R_{\overline{z} z^2})\\
              &= \alpha \Lambda_{\widehat{\gamma}^{(8)}} (z^8) + \Lambda_{\widehat{\gamma}^{(8)}} ( z^5 R_{\overline{z} z^2}), &\text{ using } \eqref{e(08)} \text{ and } \eqref{pr.44;lem.II-1},\\
              &= \Lambda_{\widehat{\gamma}^{(8)}} (z^5 P_{\overline{z} z^2})\\
              &= \widehat{\gamma}_{17}, &\text{ applying } \eqref{pr.3;lem.II-1}.
  \end{aligned}
  \end{equation}

  \begin{equation}\label{e(26)}\begin{aligned}
  \gamma_{26} &= \Lambda_{\gamma^{(8)}}(\overline{z}^2 z^2 P_{z^4}), &\text{ according to } \eqref{pr.1;lem.II-1}\\
              &= \Lambda_{\gamma^{(8)}}(\overline{z} P_{z^4} P_{\overline{z} z^2}), &\text{ use } \deg \overline{z} P_{z^4} \leq 4 \text{ and } \eqref{pr.5;lem.II-1},\\
              &= \Lambda_{\gamma^{(8)}}(\overline{z} z^4 P_{\overline{z} z^2}), &\text{ utilizing } \eqref{pr.1;lem.II-1},\\
              &= \alpha \Lambda_{\gamma^{(8)}}(\overline{z} z^7) + \Lambda_{\gamma^{(8)}}( \overline{z} z^4 R_{\overline{z} z^2})\\
              &= \alpha \Lambda_{\widehat{\gamma}^{(8)}} (\overline{z} z^7) + \Lambda_{\widehat{\gamma}^{(8)}} ( \overline{z} z^4 R_{\overline{z} z^2}),&\text{ by using } \eqref{e(17)} \text{ and } \eqref{pr.44;lem.II-1},\\
              &= \Lambda_{\widehat{\gamma}^{(8)}} (\overline{z} z^4 P_{\overline{z} z^2})\\
              &= \widehat{\gamma}_{26}, &\text{ according to } \eqref{pr.3;lem.II-1}.
  \end{aligned}
  \end{equation}

  Before continue the proof, of these lemma, let us remark that the Relation \ref{pr.44;lem.II-1} implies that, for all $i +j \leq 5$,
  \begin{equation*}
\Lambda_{\widehat{\gamma}^{(8)}}(\overline{z}^{i +1} z^{j +2}) =
\widehat{\gamma}_{i +1, j +2} = \Lambda_{\widehat{\gamma}^{(8)}}(
\overline{z}^i z^j (\alpha z^3 +R_{\overline{z} z^2})),
  \end{equation*}
and thus
\begin{equation}\label{pr.444;lem.II-1}
\Lambda_{\widehat{\gamma}^{(8)}}(\overline{z}^{i} z^{j +3})
=\frac{1}{\alpha} \Lambda_{\widehat{\gamma}^{(8)}}(\overline{z}^{i}
z^{j}(\overline{z} z^2 -R_{\overline{z} z^2})) \phantom{ttttttttt}
(i+j \leq 5).
  \end{equation}
  Now,
  \begin{equation}\label{e(35)}\begin{aligned}
  \gamma_{35} &= \Lambda_{\gamma^{(8)}}(\overline{z}^3 z P_{z^4}), &\text{ according to } \eqref{pr.1;lem.II-1},\\
              &= \Lambda_{\widehat{\gamma}^{(8)}}(\overline{z}^3 z P_{z^4}), &\text{ because } \deg \overline{z}^3 z P_{z^4} \leq 7,\\
              &= \Lambda_{\widehat{\gamma}^{(8)}}(\frac{1}{\alpha}(P_{\overline{z}^2 z} -R_{\overline{z}^2 z}) z P_{z^4})\\
              &= \frac{1}{\alpha} \Lambda_{\widehat{\gamma}^{(8)}}((\overline{z}^2 z -R_{\overline{z}^2 z}) z P_{z^4})\\
              &= \frac{1}{\alpha} \Lambda_{\gamma^{(8)}}((\overline{z}^2 z -R_{\overline{z}^2 z}) z P_{z^4}), &\text{applying } \eqref{pr.44;lem.II-1},\\
              &= \frac{1}{\alpha} \Lambda_{\gamma^{(8)}}(\overline{z}^2 z^6) -\frac{1}{\alpha} \Lambda_{\gamma^{(8)}}( z^5 R_{\overline{z}^2 z})\\
              &= \frac{1}{\alpha} \gamma_{26} -\frac{1}{\alpha} \Lambda_{\widehat{\gamma}^{(8)}}( z^5 R_{\overline{z}^2 z}),& \text{ remark that } \deg z^5 R_{\overline{z}^2} \leq 7,\\
              &= \frac{1}{\alpha} \widehat{\gamma}_{26} -\frac{1}{\alpha} \Lambda_{\widehat{\gamma}^{(8)}}(z^5 R_{\overline{z}^2 z}),&\text{from } \eqref{e(26)},\\
              &=\Lambda_{\widehat{\gamma}^{(8)}}(\frac{1}{\alpha}(P_{\overline{z}^2 z} -R_{\overline{z}^2 z}) z^5)\\
              &=\Lambda_{\widehat{\gamma}^{(8)}}(\overline{z}^3 z^5)\\
              &=\widehat{\gamma}_{35}.
  \end{aligned}
  \end{equation}

  \begin{equation}\label{e(44)}\begin{aligned}
\gamma_{44} &= \Lambda_{\gamma^{(8)}}(\overline{z}^4 P_{z^4})\\
            &= \Lambda_{\widehat{\gamma}^{(8)}}(\overline{z}^3 \overline{z} P_{z^4}) &\text{using } \eqref{pr.44;lem.II-1},\\
            &= \Lambda_{\widehat{\gamma}^{(8)}}(\frac{1}{\alpha}(P_{\overline{z}^2 z} -R_{\overline{z}^2 z}) \overline{z} P_{z^4}),&\text{by } \eqref{pr.444;lem.II-1},\\
            &= \frac{1}{\alpha} \Lambda_{\widehat{\gamma}^{(8)}}(\overline{z}^3 zP_{z^4}) -\frac{1}{\alpha} \Lambda_{\widehat{\gamma}^{(8)}}(\overline{z} P_{z^4} R_{\overline{z}^2 z})\\
            &= \frac{1}{\alpha} \Lambda_{\gamma^{(8)}}(\overline{z}^3 zP_{z^4}) -\frac{1}{\alpha} \Lambda_{\gamma^{(8)}}(\overline{z} P_{z^4} R_{\overline{z}^2 z})\\
            &= \frac{1}{\alpha}\gamma_{35} -\frac{1}{\alpha} \Lambda_{\widehat{\gamma}^{(8)}}(\overline{z} P_{z^4} R_{\overline{z}^2 z})\\
            &= \frac{1}{\alpha}\widehat{\gamma}_{35} -\frac{1}{\alpha} \Lambda_{\widehat{\gamma}^{(8)}}(\overline{z} P_{z^4} R_{\overline{z}^2 z}), &\text{applying } \eqref{e(35)},\\
            &= \Lambda_{\widehat{\gamma}^{(8)}}(\frac{1}{\alpha}(P_{\overline{z}^2 z} -R_{\overline{z}^2 z}) \overline{z} P_{z^4})\\
            &= \Lambda_{\widehat{\gamma}^{(8)}}(\overline{z}^4 P_{z^4})\\
            &= \widehat{\gamma}_{44}.
\end{aligned}
\end{equation}
This finishes the proof of Lemma \ref{lem.II-1}.
\end{proof}

\section{ Solving the quintic moment problem}

Let $\gamma^{(5)} \equiv \{\gamma_{ij}\}_{i+j\leq 5}$ be a given
complex-valued bi-sequence, with $\gamma_{00} >0$ and
 $\overline{\gamma_{ij}} = \gamma_{ji}$
for $i+j\leq 5$. The quintic moment problem involves determining
necessary and sufficient conditions for the existence of a positive
Borel measure $\mu$ on
 $\mathbb{C}$ (called a representing measure for $\gamma^{(5)}$) such that

  \begin{equation*}
  \gamma_{ij} = \int \overline{z}^i z^j d\mu, \phantom{text} \text{ for } i +j \leq 5.
  \end{equation*}

In this section we will show that in almost all cases the classical
necessary conditions $M(2) \geq 0$ and $B = M(2) W$, for some $W$,
(with $M(2)$ and $B$ are as in \eqref{ext.1}) guarantee the
existence of at most $(r+2)$-atomic (here $r := rank~~M(2)$)
representing measure for $\gamma^{(5)}$.

According to Proposition \ref{persymmetric}, the Hermitian $4 \times
4$-matrix  $W^* M(2) W$ is symmetric with respect to the second
diagonal, then one
 can set
 \begin{equation}\label{qc1}
 W^* M(2) W = \left( \begin{matrix}
 a            & b            & c & d\\
 \overline{b} & e            & f & c\\
 \overline{c} & \overline{f} & e & b\\
 \overline{d} & \overline{c} & \overline{b}     & a
 \end{matrix} \right)
 \end{equation}

    The next Theorem gives a concrete solution to the quintic complex moment
    problem, except for the case $a=e$ and $b\neq f$.

    \begin{thm}\label{mainlast}
     Let $\gamma^{(5)} \equiv \{\gamma_{ij}\}_{i+j\leq 5}$ be a given sequence, we assume that $M(2) \geq 0$  and $Rang~~B \subseteq Rang~~M(2)$, and $a\neq e$ or $b=f$

Then the quintic moment problem, associated with  $\gamma^{(5)}$,
admits a solution $\mu$. Moreover, The smallest cardinality of
$supp~~\mu$ is
  \begin{itemize}
    \item  $card~~supp~~\mu = r \iff a = e$ and $b = f$,
    \item  $card~~supp~~\mu = r +1 \iff  \quad a\neq e \textrm{ and } \frac{a-e}{2} < \mid b -f
    \mid$,
    \item  $card~~supp~~\mu = r +2 \iff \quad a>e \textrm{ and } \frac{a-e}{2} \geq \mid b -f
    \mid$;
  \end{itemize}
  where $a, b, e$ and $f$ are as in  \eqref{qc1}.
     \end{thm}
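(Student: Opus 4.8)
The plan is to convert both the existence statement and the cardinality count into a question about flat extensions, and then apply Theorem \ref{c.m.t}. Since $M(2)\ge 0$ and $Rang~~B\subseteq Rang~~M(2)$, fix $W$ with $B=M(2)W$. By Smul'jan's theorem, a completion of $\gamma^{(5)}$ to a sequence $\gamma^{(6)}$ with $M(3)\ge 0$ is exactly the choice of a Hermitian Toeplitz block $C:=M[3,3]$ with $C\ge W^{*}M(2)W$, and then \eqref{C-WAW} yields $rank~~M(3)=r+rank(C-W^{*}M(2)W)$, where $r=rank~~M(2)$. As the moment matrix of an $N$-atomic measure has rank at most $N$, any representing measure for $\gamma^{(5)}$ produces such a $C$, whence $card~~supp~~\mu\ge r+\rho$, where $\rho$ denotes the least value of $rank(T-W^{*}M(2)W)$ over Hermitian Toeplitz $T\ge W^{*}M(2)W$. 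Conversely, it suffices to exhibit one optimal $T$ for which the resulting $M(3)$ admits a flat moment-matrix extension $M(4)$: Theorem \ref{c.m.t} then gives an $(r+\rho)$-atomic representing measure, so the minimal support has exactly $r+\rho$ points.

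The first step is to compute $\rho$ from the persymmetric form \eqref{qc1} of $W^{*}M(2)W$. A Hermitian Toeplitz $T$ has constant diagonal $t_{0}$ and first superdiagonal $t_{1}$, while $t_{2},t_{3}$ (the prospective moments $\gamma_{51},\gamma_{60}$) occur only in three off-diagonal slots and can be adjusted last; thus the rank of $T-W^{*}M(2)W$ is governed by $t_{0}-a$, $t_{0}-e$, $t_{1}-b$, $t_{1}-f$. Rank $0$ is equivalent to $T=W^{*}M(2)W$, i.e. $a=e$ and $b=f$. For a rank-one positive semidefinite defect $ww^{*}$, comparing diagonals forces $|w_{1}|^{2}=|w_{4}|^{2}=t_{0}-a$ and $|w_{2}|^{2}=|w_{3}|^{2}=t_{0}-e$, and then the $(1,2)$- and $(2,3)$-entries give $|t_{1}-b|^{2}=(t_{0}-a)(t_{0}-e)$ and $|t_{1}-f|=t_{0}-e$, while the remaining off-diagonal entries merely fix $t_{2},t_{3}$ and are automatically consistent; a short argument on the circle $|t_{1}-f|=t_{0}-e$ shows this system is solvable (with $t_{0}\ge\max(a,e)$) precisely when $a-e<2|b-f|$. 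Finally, when $a>e$ and $a-e\ge 2|b-f|$ the best available defect has rank two, realised by $t_{0}=a$, $t_{1}=b$, giving $T-W^{*}M(2)W=\operatorname{diag}\bigl(0,M_{0},0\bigr)$ with $M_{0}=\bigl(\begin{smallmatrix}a-e&b-f\\ \overline{b-f}&a-e\end{smallmatrix}\bigr)\ge 0$. Restricting to the hypothesis ``$a\ne e$ or $b=f$'', this gives $\rho=0$ iff $a=e,b=f$; $\rho=1$ iff $a\ne e$ and $\tfrac{a-e}{2}<|b-f|$; and $\rho=2$ iff $a>e$ and $\tfrac{a-e}{2}\ge|b-f|$, which is the trichotomy of the statement.

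It remains to realise the upper bound, i.e. to produce, for a suitable optimal $T$, a flat extension $M(4)$ of the associated $M(3)$. When $\rho=0$, $M(3)$ is a flat extension of $M(2)$; by the Curto–Fialkow flat-extension principle it extends flatly to $M(4)$, and Theorem \ref{c.m.t} yields an $r$-atomic measure. When $\rho\in\{1,2\}$, the identity $\binom{B}{C}=\binom{M(2)}{B^{*}}W+\binom{0}{C-W^{*}M(2)W}$, relating the degree-$3$ columns of $M(3)$ to the lower ones, shows that the rank-$\rho$ defect $C-W^{*}M(2)W$ forces explicit column relations in $M(3)$: a relation $\overline{Z}Z^{2}=\alpha Z^{3}+R_{\overline{Z}Z^{2}}$ with companions for $\overline{Z}^{2}Z$ and $\overline{Z}^{3}$ when $\rho=1$, and (for the relevant optimal $T$) relations expressing $Z^{3}$ and $\overline{Z}^{3}$ through quadratics together with one extra independent degree-$3$ column when $\rho=2$. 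Thus $\gamma^{(6)}$ is recursive in the sense of Section~3. One then extends $\gamma^{(6)}$ to $\gamma^{(8)}$ by prescribing recursiveness with respect to these generating polynomials and those already carried by the column relations of $M(2)$ — the data being well posed by Remark \ref{rem.construct.} — and uses Proposition \ref{prop.lien} together with Lemma \ref{lem.II-1} to check that all the recursions are mutually compatible and persist up to degree $8$; this gives $rank~~M(4)=rank~~M(3)=r+\rho$, so $M(4)$ is a flat extension and Theorem \ref{c.m.t} concludes.

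The step I expect to be the main obstacle is this last compatibility check, and it is exactly what forces the hypothesis ``$a\ne e$ or $b=f$''. Lemma \ref{lem.II-1} requires a genuine degree-four generating polynomial $z^{4}-P_{z^{4}}$ (with $P_{z^{4}}=\beta z^{3}+R_{z^{4}}$) to be available in the extension, and the companion arguments require leading coefficients such as $\alpha$ to be nonzero; tracking these requirements back through the column identities, they are met when $a\ne e$ or $b=f$, but one of them degenerates precisely when $a=e$ and $b\ne f$, so the optimal rank-one $M(3)$ there does not close up into a flat $M(4)$ by the present method. The lower bound $card~~supp~~\mu\ge r+1$ in that case still follows from the first paragraph.
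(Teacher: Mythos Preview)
Your overall architecture is exactly the paper's: compute the minimal rank $\rho$ of a Toeplitz--psd defect $T-W^{*}M(2)W$, and for an optimal $T$ produce a flat $M(4)$ over the resulting $M(3)$. The $\rho$-computation and the lower bound are fine. The gap is in the $\rho=2$ step. Your optimal $T$ there is $t_{0}=a,\ t_{1}=b,\ t_{2}=c,\ t_{3}=d$, which makes the first and last columns of the defect vanish, so in $M(3)$ you obtain $Z^{3}=P_{z^{3}}(Z,\overline{Z})$ and $\overline{Z}^{3}=P_{\overline{z}^{3}}(Z,\overline{Z})$ with $\deg P\le 2$, while $\overline{Z}Z^{2}$ and $\overline{Z}^{2}Z$ are the two (not one) new independent columns. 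But Lemma~\ref{lem.II-1}, which you then invoke, is tailored to the pair $\bigl(z^{4}-P_{z^{4}},\ \overline{z}z^{2}-P_{\overline{z}z^{2}}\bigr)$ with the crucial hypothesis $\alpha\ne 0$ on the $z^{3}$-coefficient of $P_{\overline{z}z^{2}}$; your $\rho=2$ construction gives no $\overline{Z}Z^{2}$-relation at all, so Lemma~\ref{lem.II-1} does not apply, and with only the relations $z^{3}=P_{z^{3}}$, $\overline{z}^{3}=P_{\overline{z}^{3}}$ there is no mechanism to force the column $\overline{Z}^{2}Z^{2}$ of $M(4)$ into the degree-$\le 3$ span. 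The paper avoids this by choosing, in \emph{both} subcases II-1 and II-2, a different optimal $T$ (see \eqref{pr.4}, with $\gamma_{33}>\max(a,e)$ and $|\gamma_{42}-b|^{2}=(\gamma_{33}-a)(\gamma_{33}-e)$) designed so that the defect kernel contains $(\overline{Z}Z^{2})-\alpha(Z^{3})$, yielding in $M(3)$ the relation $\overline{Z}Z^{2}=\alpha Z^{3}+R_{\overline{z}z^{2}}$ with $\alpha=(\gamma_{42}-b)/(\gamma_{33}-a)\ne 0$; this is exactly what feeds Lemma~\ref{lem.II-1}.

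A second point you should track explicitly is $|\alpha|\ne 1$. In Step~2 one must solve $\gamma_{43}=\alpha\,\overline{\gamma_{43}}+C$ for the septic moment $\gamma_{43}$ (the paper's \eqref{e(43)}), and this is solvable iff $|\alpha|\ne 1$; with the paper's $T$ one has $|\alpha|^{2}=(\gamma_{33}-e)/(\gamma_{33}-a)$, so $|\alpha|\ne 1$ precisely because $a\ne e$. This is the concrete place where the hypothesis ``$a\ne e$ or $b=f$'' enters, and your closing paragraph gestures at it but does not pin it down.
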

Before we develop the proof of our  theorem, let us  introduce some
notations. For  $n \in \{3, 4\}$; let $\gamma^{(2n)} \equiv
\{\gamma_{ij}\}_{i+j\leq 2n}$ be a truncated complex bi-sequence and
let $M(n)$ be the associated moment matrix. As before, we denote by
$B(n)$ and $C(n)$ the $(n-1)\times n$-matrix and the $n \times
n$-matrix, respectively, such that
  \begin{equation}\label{mr0}
  M(n) = \left(\begin{matrix}
  M(n -1) & B(n) \\
  B^*(n)  & C(n)
  \end{matrix} \right)
  \end{equation}

    Let $\mathfrak{B} \equiv \mathfrak{B}(n) \equiv \{ \overline{Z}^i Z^j \}_{(i, j) \in \mathfrak{R}}$
  (where $\mathfrak{R} \equiv \mathfrak{R}(n) \subseteq \{0, 1, \ldots, n\} \times \{0, 1, \ldots, n\}$) be a basis for the column space of $M(n)$.
Let us remark that the  $ r \times r$-matrix  $M(n)_{\mid
\mathfrak{B}}$, where $r \equiv r(n) := card~~\mathfrak{R}(n)$, the
restriction of the moment matrix $M(n)$ to the basis $\mathfrak{B}$,
is invertible.

%


{\it Proof of Theorem \ref{mainlast}.} The main  idea is to extend
the initial data $\gamma^{(5)}$ to an even-degree $\gamma^{(6)}$ (by
adding the sixtic moments $\gamma_{60} = \overline{\gamma_{06}}$,
$\gamma_{51} = \overline{\gamma_{15}}$, $\gamma_{42} =
\overline{\gamma_{24}}$ and $\gamma_{33} \in \mathbb{R}$)  such that
the associated moment matrix $M(3)$, for an appropriate choice of
the missing  moments, is either a flat extension of $M(2)$ or admits
admits a flat extension $M(4)$. Thus Theorem \ref{c.m.t} yields that
$M(3)$ has a representing measure; and  as a consequence,
$\gamma^{(5)}$ also admits a representing measure $\mu$. It is also
proved that the smallest cardinality of $supp~~\mu$ will be $r :=
rank~~M(2)$ or $r+1$ or $r+2$.

By virtue of the Smul'jan's Theorem, we need to find a Toeplitz
square matrix $C(3)$, built with  the new, sixtic, moments as
entries and  such that $C(3) - W^* M(2) W \geq 0$. Setting
   \begin{equation}\label{qc2}
 C(3) - W^* M(2) W = \left( \begin{matrix}
 \gamma_{33} -a            & \gamma_{42} -b            & \gamma_{51} -c & \gamma_{60} -d\\
 \gamma_{24} -\overline{b} & \gamma_{33} -e            & \gamma_{42} -f & \gamma_{51} -c\\
 \gamma_{15} -\overline{c} & \gamma_{24} -\overline{f} & \gamma_{33} -e & \gamma_{42} -b\\
 \gamma_{06} -\overline{d} & \gamma_{15} -\overline{c} & \gamma_{24} -\overline{b}     & \gamma_{33} -a
 \end{matrix} \right),
 \end{equation}
we will distinguish two cases:\\

{\bf  Case I: $a = e$ and $b = f$. }  In this case the matrix $W^*
M(2) W$ is a Toeplitz one, then it suffice to consider that $C(3) =
W^* M(2) W$. According to \eqref{C-WAW}, the matrix $M(3)$ is a flat
extension of $M(2)$ and thus $\gamma^{(6)}$ (and in force
$\gamma^{(5)}$) has a $r$-representing measure.

{\bf Case II: $a \neq e$.}   We proceed in two steps  for this case.
 obviously,  the matrix $W^* M(2) W$ is not a Toeplitz one.
 Therefore, for every choice of a Toeplitz $4\times 4$-matrix $C(3)$, we have $rank~~(C(3) - W^* M(2) W) \geq 1$.
  We will show, in first step, that the smallest possible $rank$ of $C(3) - W^* M(2) W$ will be  either $1$  or $2$.
   In the second step, we will show that the moment matrix $M(3)$, obtained by extending  $\gamma^{(5)}$ with the entries
   of some suitable $C(3)$, has a flat extension and thus admits a $rank~~M(3)$-atomic representing measure, see Theorem \ref{c.m.t}.

 {\it Step 1:  (construction of $C(3)$).} Firstly, let us observe that
 \begin{equation}\label{pr.0}
  rank (C(3) - W^* M(2) W) = 1 \textrm{ and } C(3) - W^* M(2) W\geq 0
  \end{equation}
  if and only if we have
 \begin{equation}\label{pr.1}\begin{aligned}
 (0)   & \; \gamma_{33}> max(a,e).\\
 (i)   & \mid \gamma_{42} -b \mid = \sqrt{(\gamma_{33} -a)(\gamma_{33} -e)} &\text{and} &  \mid \gamma_{42} -f \mid = \gamma_{33} -e.\\
 (ii)  & (\gamma_{15} -\overline{c}) ( \gamma_{42} -b )= (\gamma_{33} -a) (\gamma_{24} -\overline{f}).\\
 (iii) & (\gamma_{06} -\overline{d}) (\gamma_{42} -b)^2= (\gamma_{33} -a)^2 (\gamma_{24} -\overline{f}) &\text{and} & \mid \gamma_{06} -\overline{d} \mid^2 = (\gamma_{33} -a)^2.
 \end{aligned}
 \end{equation}
 Remark that the equalities $(i)$ provide the compatibility of the two equalities in $(iii)$ and vice versa.

The condition $(i)$ means that $\gamma_{42}$ is in the intersection
of the two next circles $\mathcal{C} = \mathcal{C}(b,
\sqrt{(\gamma_{33} -a)(\gamma_{33} -e)})$, of radius
$\sqrt{(\gamma_{33} -a)(\gamma_{33} -e)}$ and centered at  $b$, and
$\mathcal{C'}=\mathcal{C}(f, \gamma_{33} -e)$.

It is an easy geometrical observation to see that, the two circles
$\mathcal{C}$ and  $\mathcal{C'}$ have a nonempty intersection if,
and only if, there exists $\gamma_{33} > \max(a, e)$, such that
 \begin{equation}\label{pr.3}
 \mid (\gamma_{33} -e) - \sqrt{(\gamma_{33} -a)(\gamma_{33} -e)} \mid \leq \mid b -f \mid \leq \gamma_{33} -e + \sqrt{(\gamma_{33} -a)(\gamma_{33} -e)}
 \end{equation}

 As $x \rightarrow (x -e) - \sqrt{(x -a)(x -e)}$ is decreasing (on $[\max(a, e); +\infty[$),
 $ (x -e) - \sqrt{(x -a)(x -e)} \xrightarrow[x \rightarrow +\infty]{} \frac{a-e}{2}$   and $(x -e) + \sqrt{(x -a)(x -e)} \xrightarrow[x \rightarrow +\infty]{} +\infty.$
 Then \eqref{pr.3} is verified if and only if $a=e$ and $b\neq f$ or $a<e$ or $a>e$ and $|b-f|>\frac{a-e}{2}$.

 {\bf Subcase II-1: $a<e$ or $a>e$ and $|b-f|>\frac{a-e}{2}$.}
 It suffices to choose $\gamma_{33}$ verifying \eqref{pr.3}, and thus $\gamma_{42}$ exists (as the point intersection of the two circles $\mathcal{C}$ and $\mathcal{C}'$).
  Furthermore , from $(0)$ and $(i)$ we derive that
\begin{equation}\label{alpha}
(\gamma_{42} -b)(\gamma_{42} -f) \neq 0
\end{equation}
 The equality $(ii)$ gives the moment $\gamma_{15}$ and $(iii)$ supplies $\gamma_{06}$, and this complete the construction
 of a Toeplitz matrix $C(3)$ for which $rank (C(3) - W^* M(2) W) = 1$. Note that, $rank~~M(3)_{\mid \mathfrak{B}(2) \cup \{ Z^3 \} } = rank~~M(2) +1 = rank~~M(3)$.
  Hence, in $M(3)$, the columns $\overline{Z} Z^2, \overline{Z}^2 Z$
and $\overline{Z}^3$ are a linear combination of the columns
$\mathfrak{B}(2) \cup \{ Z^3 \}$. In particular, we can set
 \begin{equation} \label{line12}
 \overline{Z} Z^2 = P_{\overline{z} z^2} (Z, \overline{Z}) = \alpha Z^3 + R_{\overline{z} z^2} (Z, \overline{Z}),
 \end{equation}
 with
  \begin{equation}\label{alpha2}\begin{aligned}
 \alpha &= \frac{ \det \begin{vmatrix} M(2)_{\mid \mathfrak{B}(2) } & {\overline{Z}Z^2}_{\mid \mathfrak{B}(2) } \\ (Z^3_{\mid \mathfrak{B}(2) })^* & \gamma_{42} \end{vmatrix}}
 {\det \begin{vmatrix} M(2)_{\mid \mathfrak{B}(2) } & Z^3_{\mid \mathfrak{B}(2) }\\ (Z^3_{\mid \mathfrak{B}(2) })^* & \gamma_{33} \end{vmatrix}}\\
   &= \frac{ \det \begin{vmatrix} M(2)_{\mid \mathfrak{B}(2) } & {\overline{Z}Z^2}_{\mid \mathfrak{B}(2) } \\ (Z^3_{\mid \mathfrak{B}(2) })^* &  b \end{vmatrix} + (\gamma_{42} -b)~~~~\det\mid M(2)_{\mid \mathfrak{B}(2)}\mid }
 {\det \begin{vmatrix} M(2)_{\mid \mathfrak{B}(2) } & Z^3_{\mid \mathfrak{B}(2) }\\ (Z^3_{\mid \mathfrak{B}(2) })^* & a \end{vmatrix} +(\gamma_{33} -a)~~~~\det \mid M(2)_{\mid \mathfrak{B}(2)}\mid}\\
  &= \frac{(\gamma_{42} -b)~~~~\det \mid M(2)_{\mid \mathfrak{B}(2)}\mid}{(\gamma_{33} -a)~~~~\det \mid M(2)_{\mid \mathfrak{B}(2)} \mid}\\
  &= \frac{\gamma_{42} -b}{\gamma_{33} -a} \neq 0; ~~~~ \text{ by virtue of } \eqref{alpha}.
    \end{aligned}\end{equation}

  {\bf Subcase II-2:  $a>e$ and $\frac{a -e}{2} \geq \mid b -f \mid$.} Then $rank (C(3) - W^* M(2) W) \geq 2$ for every $4 \times 4$-Toeplitz matrix $C(3)$. Let us choose the sixtic moments as follows
  \begin{equation}\label{pr.4}\begin{cases}
  \gamma_{33} > \max(a, e),\\
  \mid \gamma_{42} -b \mid = \sqrt{(\gamma_{33} -a)(\gamma_{33} -e)},\\
  \gamma_{15} -\overline{c} = \frac{\gamma_{33} -a}{\gamma_{42} -b} (\gamma_{24} -\overline{f})\\
  \gamma_{06} -\overline{d} = (\frac{\gamma_{33} -a}{\gamma_{42} -b})^2 (\gamma_{24} -\overline{f}).
  \end{cases}
  \end{equation}
  Let us remark that as the first subcase II-1, we have
    \begin{equation}\label{alpha3}
  \frac{\gamma_{42} -b}{\gamma_{33} -a} = \sqrt{\frac{\gamma_{33} -e}{\gamma_{33} -a}} \neq 0.
  \end{equation}
  The moment defined in \eqref{pr.4} construct a Toeplitz matrix $C(3)$ for which $rank (C(3) - W^* M(2) W) = 2$. Indeed, it suffices to observe that
  \begin{itemize}
    \item $(C(3) - W^* M(2) W)(Z^3) =  \frac{\gamma_{33} -a}{\gamma_{42} -b} (C(3) - W^* M(2) W)(\overline{Z} Z^2)$,\\
    \item $(C(3) - W^* M(2) W)(\overline{Z}^3) =  \frac{\gamma_{33} -a}{\gamma_{24} -\overline{b}} (C(3) - W^* M(2) W)(\overline{Z}^2 Z),$\\
    \item the columns $(C(3) - W^* M(2) W)(Z^3)$ and $(C(3) - W^* M(2) W)(\overline{Z}^3)$ are nonlinear (because $(i)$ can not be verified).
  \end{itemize}
  Therefore, in $M(3)$, the columns $\overline{Z} Z^2$ is a linear combination of the columns $\mathfrak{B}(2) \cup \{ Z^3 \}$. For reason of simplicity, we adopt the notation of the Relation \eqref{line12}, that is,
 \begin{equation} \label{line12;1}
 \overline{Z} Z^2 = P_{\overline{z} z^2} (Z, \overline{Z}) = \alpha Z^3 + R_{\overline{z} z^2} (Z, \overline{Z}),
 \end{equation}
 Where
$$\alpha =\frac{\gamma_{42} -b}{\gamma_{33} -a} \neq 0$$
by using \eqref{alpha3}.

  We conclude that, in the both cases {\it II-1} and  {\it II-2}, we have extended the initial data $\gamma^{(5)}$ to $\gamma^{(6)}$ so that the associated moment matrix $M(3)$ has the following columns relation
\begin{equation} \label{line12;3}
 \overline{Z} Z^2 = P_{\overline{z} z^2} (Z, \overline{Z}) = \alpha Z^3 + R_{\overline{z} z^2} (Z, \overline{Z}), ~~~~\text{ with } \alpha
 \neq 0.
 \end{equation}
 We also note that since $a\neq e$ we get,
\begin{equation}
|\alpha|=\left|\frac{\gamma_{42} -b}{\gamma_{33}
-a}\right|=\frac{\sqrt{(\gamma_{33} -a)(\gamma_{33}
-e)}}{\gamma_{33} -a}\neq 1
\end{equation}

  {\it Step 2: ($M(3)$ has a flat extension, and thus a representing measure).}
  We will build  moments $\{ \gamma_{ij}\}_{i +j =7, 8}$ for which the moment matrix $M(4)$ is a flat extension of $M(3)$.

  The relation \eqref{line12;3} yields that
  $$\langle M(3) \overline{Z} Z^2, \overline{Z}^j Z^i\rangle = \langle M(3) P_{\overline{z} z^2} (Z, \overline{Z}), \overline{Z}^j Z^i\rangle,~~~~ \text{ for all } i +j\leq 3.$$
  By applying \eqref{5}, one obtain
  \begin{equation}\label{(6)}
  \Lambda_{\gamma^{(6)}}(\overline{z}^{i +1} z^{j +2}) = \Lambda_{\gamma^{(6)}}(\overline{z}^i z^j P_{\overline{z} z^2}), \phantom{ttttttt} i +j\leq 3.
  \end{equation}

     Since $|\alpha| \neq 1$, we derive that  there exists a complex number $\gamma_{43} = \overline{\gamma_{43}}$ such that
  \begin{equation}\label{e(43)}
  \gamma_{43} = \Lambda (\overline{z}^3 z P_{\overline{z} z^2}),
  \end{equation}
  that is, $$\gamma_{43} = \alpha \gamma_{34} + \sum\limits_{i +j \leq 2} \alpha_{i, j} \gamma_{i +3, j+1}.$$

It follows, from \eqref{e(43)} and \eqref{(6)}, that $\overline{z}
z^2 -P_{\overline{z} z^2}$ is  a generating polynomial of
$\gamma^{(6)} \cup \{\gamma_{34}, \gamma_{43} \}$.

  Since   $\bigl( \begin{smallmatrix} M(2)_{\mid \mathfrak{B}(2) } & Z^3_{\mid \mathfrak{B}(2) }\\ (Z^3_{\mid \mathfrak{B}(2) })^* & \gamma_{33} \end{smallmatrix} \bigr) > 0$, then there exists a (unique) vector, say
  $$P_{z^4} = \beta z^3 + R_{z^4} = \beta z^3 +\sum\limits_{\overline{z}^i z^j \in \mathfrak{B}(2)} \beta_{ij} \overline{z}^i z^j$$
  the associated polynomial, such that
  $$\bigl( \begin{smallmatrix} M(2)_{\mid \mathfrak{B}(2) } & Z^3_{\mid \mathfrak{B}(2) }\\ (Z^3_{\mid \mathfrak{B}(2) })^* & \gamma_{33} \end{smallmatrix} \bigr) P_{z^4} = ((\gamma_{04}, \gamma_{14}, \gamma_{05}, \gamma_{24}, \gamma_{15}, \gamma_{06})_{\mid \mathfrak{B}(2) }, \gamma_{34})^T.$$

   Therefore the sequence $\gamma^{(6)} \cup \{ \gamma_{34}, \gamma_{43}\}$ verifies that

   \begin{equation}\label{r04}
   \gamma_{i, j +4} = \Lambda(\overline{z}^i z^j P_{z^4}), \phantom{tex} \text{ for all } i +j \leq 2 \text{ and } (i, j) = (3, 0);
   \end{equation}

   \begin{equation}\label{r40}
   \gamma_{i +4, j} = \Lambda(\overline{z}^i z^j P_{\overline{z}^4}), \phantom{tex} \text{ for all } i +j \leq 2 \text{ and } (i, j) = (0, 3).
   \end{equation}
    Thus $z^4 -P_{z^4}$ is a generating polynomial of $\gamma^{(6)} \cup \{ \gamma_{34}, \gamma_{43}\}$.

    We will build a sequence $\gamma^{(8)} \equiv \{\gamma_{ij} \}_{i+j \leq 8}$, the extension of $\gamma^{(6)} \cup \{ \gamma_{34}, \gamma_{43}\}$, by using a generating polynomial $P_{z^4}$ and the initial data $\{\gamma_{ij} \}_{i, j \leq 3}$, that is,
    \begin{equation}\label{mr(04)}
    \gamma_{i, j +4} = \Lambda(\overline{z}^i z^j P_{z^4}) = \beta \gamma_{i, j+3} +\sum\limits_{\overline{z}^l z^k \in \mathfrak{B}(2)} \beta_{lk} \gamma_{i+l, j+k} \phantom{text} (i +j \leq 4)
    \end{equation}
   or, equivalently,
   \begin{equation}\label{mr(40)}
    \gamma_{i +4, j} = \Lambda(\overline{z}^i z^j P_{\overline{z}^4}) = \overline{\beta} \gamma_{i +3, j} +\sum\limits_{\overline{z}^l z^k \in \mathfrak{B}(2)} \overline{\beta_{lk}} \gamma_{i+k, j+l} \phantom{text} (i +j \leq 4).
    \end{equation}

  Hence, lemma \ref{lem.II-1} implies that $\overline{z} z^2 -P_{\overline{z} z^2}$ and $z^4 -P_{z^4}$ are two generating polynomials of $\gamma^{(8)}$.
  Therefore, in $M(4)$, the columns $Z^4, \overline{Z} Z^3, \overline{Z}^2 Z^2, \overline{Z}^3 Z, \overline{Z}^4$ are a linear combination of the columns
  $\{\overline{Z}^i Z^j \}_{i+j \leq 3}$ and thus $M(4)$ is a flat extension of $M(3)$. Indeed, it suffices to observe that
 $P_{z^4}, P_{\overline{z}^4}, P_{\overline{z} z^2}, P_{\overline{z}^2 z} \in V \equiv Vect( Z^3, \overline{Z}^3, Z^2, \overline{Z}Z, \overline{Z}^2, \overline{Z}, Z, 1)$ and thus $zP_{\overline{z} z^2}, \overline{z} P_{\overline{z}^2 z}, \overline{z}P_{\overline{z}z^2} \in V$; also one have, for all $i +j \leq 4$,
  \begin{equation*}\begin{aligned}
&\langle M(4) Z^4, \overline{Z}^i Z^j\rangle = \langle M(4) P_{z^4}, \overline{Z}^i Z^j\rangle;\\
&\langle M(4) \overline{Z}^4, \overline{Z}^i Z^j\rangle = \langle M(4) P_{\overline{z}^4}, \overline{Z}^i Z^j\rangle;\\
&\langle M(4)\overline{Z}Z^3, \overline{Z}^i Z^j\rangle = \langle M(4) zP_{\overline{z} z^2}, \overline{Z}^i Z^j\rangle;\\
&\langle M(4)\overline{Z}^2Z^2, \overline{Z}^i Z^j\rangle = \langle M(4)\overline{z}P_{\overline{z}z^2}, Z^i Z^j\rangle ;\\
\text{ and } &\langle M(4)\overline{Z}^3Z, \overline{Z}^i Z^j\rangle
= \langle M(4) \overline{z} P_{\overline{z}^2 z}, \overline{Z}^i
Z^j\rangle.
 \end{aligned} \end{equation*}
   This finishes the proof of the theorem.
\section{Example}
We consider the quintic sequence,
$$
\begin{array}{llllll}
\gamma_{00}=6& & & & & \\ \gamma_{01}=1+i& \gamma_{10}=1-i& & & &\\
\gamma_{20}=-2i & \gamma_{11}=6 & \gamma_{02}=2i & & &\\
\gamma_{30}=-2-2i & \gamma_{21}=2-2i & \gamma_{12}=2+2i & \gamma_{03}=-2+2i & &\\
\gamma_{40}=0 & \gamma_{31}= -4i& \gamma_{22}=8 & \gamma_{13}=4i & \gamma_{04}=0 & \\
\gamma_{50}=-4+4i & \gamma_{41}=-4-4i & \gamma_{32}=4-4i &
\gamma_{23}=4+4i & \gamma_{14}=-4+4i &\gamma_{05}=-4-4i.
\end{array}
$$
then our matrices are
$$
M(2)= \left(
\begin{array}{cccccc}
 6 & 1+i & 1-i & 2 i & 6 & -2 i \\
 1-i & 6 & -2 i & 2+2 i & 2-2 i & -2-2 i \\
 1+i & 2 i & 6 & -2+2 i & 2+2 i & 2-2 i \\
 -2 i & 2-2 i & -2-2 i & 8 & -4 i & 0 \\
 6 & 2+2 i & 2-2 i & 4 i & 8 & -4 i \\
 2 i & -2+2 i & 2+2 i & 0 & 4 i & 8 \\
\end{array}
\right) $$ and
$$
 B=\left(
\begin{array}{cccc}
 -2+2 i & 2+2 i & 2-2 i & -2-2 i \\
 4 i & 8 & -4 i & 0 \\
 0 & 4 i & 8 & -4 i \\
 4+4 i & 4-4 i & -4-4 i & -4+4 i \\
 -4+4 i & 4+4 i & 4-4 i & -4-4 i \\
 -4-4 i & -4+4 i & 4+4 i & 4-4 i \\
\end{array}
\right).
$$
The fact that $M(2)$ is positive definite implies,
$$
W=(M(2))^{-1} B = \left(
\begin{array}{cccc}
 0 & 0 & 0 & 0 \\
 0 & 1 & 0 & 1 \\
 1 & 0 & 1 & 0 \\
 \frac{3}{4}+\frac{3 i}{4} & \frac{1}{4}-\frac{i}{4} & -\frac{1}{4}-\frac{i}{4} & -\frac{3}{4}+\frac{3 i}{4} \\
 0 & 0 & 0 & 0 \\
 -\frac{3}{4}-\frac{3 i}{4} & -\frac{1}{4}+\frac{i}{4} & \frac{1}{4}+\frac{i}{4} & \frac{3}{4}-\frac{3 i}{4} \\
\end{array}
\right)
$$
and
$$
W^*M(2)W=\left(
\begin{array}{cccc}
 12 & -8 i & -4 & 8 i \\
 8 i & 12 & -8 i & -4 \\
 -4 & 8 i & 12 & -8 i \\
 -8 i & -4 & 8 i & 12 \\
\end{array}
\right)
$$
Since $a=e=12$ and $b=f=-8i$, according to theorem \ref{mainlast},
our sequence is a moment matrix for a 6 atomes measure. In fact,
from $W$, we can see that
$Z^3+\frac{3(1+i)}{4}(\bar{Z}^2-Z^2)-\bar{Z}$ and
$Z^2\bar{Z}+\frac{(1-i)}{4}(\bar{Z}^2-Z^2)-Z$ are two characteristic
polynomials for the moment sequence. The comment roots of the two
polynomials are
$$
\{\pm 1 , \pm i , 0 , 1+i \}
$$
Finally get that
$\mu=\delta_{1}+\delta_{-1}+\delta_{i}+\delta_{-1}+\delta_{0}+\delta_{1+i}$.



\end{document}